\newcommand*{\MRref}[2]{ \href{http://www.ams.org/mathscinet-getitem?mr=#1}{MR #1}}
\newcommand*{\arxiv}[1]{\href{http://www.arxiv.org/abs/#1}{arXiv: #1}}
\numberwithin{equation}{section}
\theoremstyle{plain}
\newtheorem{theorem}[equation]{Theorem}
\newtheorem{lemma}[equation]{Lemma}
\newtheorem{proposition}[equation]{Proposition}
\newtheorem{corollary}[equation]{Corollary}
\theoremstyle{definition}
\newtheorem{definition}[equation]{Definition}
\theoremstyle{remark}
\newtheorem{remark}[equation]{Remark}
\newtheorem{example}[equation]{Example}
\DeclareMathOperator{\Aut}{Aut}
\DeclareMathOperator{\spn}{span}
\DeclareMathOperator{\supp}{\mathrm{supp}}
\newcommand*{\nb}{\nobreakdash}
\newcommand*{\Star}{\(^*\)\nobreakdash-}
\newcommand*{\dd}{\,d}
\newcommand*{\C}{\mathbb C}
\newcommand*{\Real}{\mathbb R}
\newcommand*{\NN}{\mathbb N}
\newcommand*{\Lb}{\mathcal L}
\newcommand*{\K}{\mathcal K}
\renewcommand*{\H}{\mathcal H}
\newcommand*{\M}{\mathcal{M}} 
\newcommand*{\cont}{C}
\newcommand*{\contz}{\cont_0}
\newcommand*{\contc}{\cont_c}
\newcommand*{\id}{\textup{id}}
\newcommand*{\Ad}{\textup{Ad}}
\renewcommand{\i}{\operatorname{i}}
\newcommand*{\E}{\mathcal E}
\newcommand*{\A}{\mathcal A}
\newcommand*{\B}{\mathcal B}
\newcommand*{\EE}{\mathbb E}
\newcommand*{\BB}{\mathbb B}
\newcommand*{\defeq}{\mathrel{\vcentcolon=}}
\newcommand*{\congto}{\xrightarrow\sim}
\newcommand*{\sbe}{\subseteq} 
\newcommand*{\F}{\mathcal F}
\newcommand*{\cstar}{\texorpdfstring{$C^*$\nobreakdash-\hspace{0pt}}{*-}}
\newcommand*{\into}{\hookrightarrow}
\newcommand*{\onto}{\twoheadrightarrow}
\newcommand*{\red}{{r}}
\renewcommand*{\max}{\mathrm{max}}
\newcommand{\eps}{\varepsilon}
\newcommand*{\Free}{\mathbb{F}}
\newcommand{\lk}{\langle}
\newcommand{\rk}{\rangle}
\newcommand{\dualG}{\widehat{G}}
\newcommand{\dual}[1]{\widehat{#1}}
\begin{document}
\title[Maximality of dual coactions and applications]{Maximality of dual coactions on sectional  \\ \cstar{}algebras
of Fell bundles and applications}

\author{Alcides Buss}
\email{alcides@mtm.ufsc.br}
\address{Departamento de Matem\'atica\\
 Universidade Federal de Santa Catarina\\
 88.040-900 Florian\'opolis-SC\\
 Brazil}

\author{Siegfried Echterhoff}
\email{echters@uni-muenster.de}
\address{Mathematisches Institut\\
 Westf\"alische Wilhelms-Universit\"at M\"un\-ster\\
 Einsteinstr.\ 62\\
 48149 M\"unster\\
 Germany}

\begin{abstract}
In this paper we give a simple proof of the maximality of dual coactions on full cross-sectional \cstar{}algebras of Fell bundles over locally compact groups. This result was only known for discrete groups or for saturated (separable) Fell bundles over locally compact groups.
Our proof, which is derived as an application of the theory of universal generalised fixed-point algebras for weakly proper actions, is different from these  previously known cases and works for general Fell bundles over locally compact groups.
As  applications we extend certain exotic crossed-product functors in the sense of Baum, Guentner and Willett to the
category of Fell bundles and the category of partial actions and we obtain results about the $K$-theory  of (exotic) cross-sectional algebras of Fell-bundles over $K$-amenable groups.
As a bonus, we give a characterisation of maximal coactions of discrete groups in terms of maximal tensor products.
\end{abstract}

\subjclass[2010]{46L55, 46L08}

\thanks{Supported by Deutsche Forschungsgemeinschaft (SFB 878, Groups, Geometry \& Actions) and by CNPq/CAPES (Brazil).}

\maketitle

\section{Introduction}
\label{sec:introduction}
The theory of Fell bundles $\mathcal B$ over a locally compact group $G$ (also called \cstar{}algebraic bundles in \cite{Doran-Fell:Representations_2})
and their cross-sectional algebras give far reaching generalisations of the theory of
crossed products by strongly continuous actions $\alpha:G\to\Aut(A)$ of $G$ on \cstar{}algebras $A$.
Important examples of Fell bundles come from (twisted) partial actions (see \cite{Exel:TwistedPartialActions}) of $G$
on \cstar{}algebras $A$ and in this case the crossed products for such actions are by definition
given as the cross-sectional \cstar{}algebras of the associated Fell bundles.

Recall from {\cite{Doran-Fell:Representations,Doran-Fell:Representations_2}} that  a Fell bundle $\mathcal B$ over $G$ consists of
a topological space $\mathcal B$ together with a continuous open surjection
$p:\mathcal B\onto G$ such that the fibres $B_s:=p^{-1}(\{s\})$ are Banach spaces for all $s\in G$ and such that
all operations like
multiplication with scalars,  fibre-wise addition,  and norm are continuous on $\B$.
Moreover, $\mathcal B$
 comes equipped with  an associative continuous
multiplication function
$$\cdot: \mathcal B\times \mathcal B\to\mathcal B; (a,b)\mapsto a\cdot b$$
which is bilinear when restricted to $B_s\times B_t$ for all $s,t\in G$
and such that $B_s\cdot B_t\subseteq B_{st}$.
In addition, $\mathcal B$ is equipped with a continuous involution
$*:\mathcal B\to\mathcal B$, $b\mapsto b^*$ which sends
$B_t$ to $B_{t^{-1}}$ for all $t\in G$ and which is compatible
with multiplication and addition on $\mathcal B$ in a sense
extending the usual properties for involutions on \cstar{}algebras.
In particular, the \cstar{}condition $\|b^*b\|=\|b\|^2$ and the positivity condition $b^*b\geq 0$ in $B_e$
are required to hold for all $b\in \B$. Note that the unit fibre $B_e$ in a Fell bundle $\B$ is always a \cstar{}algebra. A Fell bundle is called {\em saturated} if $\spn(B_t^*B_t)=B_e$ for all $t\in G$.

Given a Fell bundle $\mathcal B$, let $C_c(\B)$ denote the
space of continuous sections with compact support. It carries
multiplication and involution given by the formulas
\begin{equation}\label{eq-conv}
f*g(s)=\int_G f(t)g(t^{-1}s)\,dt\quad\text{and}\quad f^*(s)=\Delta_G(s^{-1}) f(s^{-1})^*.
\end{equation}
In general there might exist many possible \cstar{}completions of $C_c(\B)$.
The largest ($L^1$\nb-bounded) \cstar{}norm on $C_c(\B)$ is the {\em universal (or maximal) cross-sectional algebra} $C^*(\B)$ whose
representations are in one-to-one correspondence to the continuous \Star{}representations of the bundle $\B$. On the other extreme there is the {\em reduced} cross-sectional algebra $C_r^*(\B)$ which is defined as the image of $C^*(\B)$
under the regular representation $\Lambda: \B\to \Lb(L^2(\B))$.

If $\B$ is a Fell-bundlle and if $u_G:G\to U\M(C^*(G))$ denotes the universal representation of $G$, then the
integrated form of the  representation
$$\delta_\B=\iota_\B\otimes u_G: \B\to \M(C^*(\B)\otimes C^*(G)); b_t\mapsto b_t\otimes u_G(t)$$
defines a {\em dual coaction} of $G$ (or rather of the Hopf-\cstar{}algebra $C^*(G)$)
on $C^*(\B)$  (see \cite{ExelNg:ApproximationProperty}).
It is the main purpose  of this paper to show that this coaction always satisfies Katayama duality for the maximal bidual crossed
product in the sense that a certain canonical surjective homomorphism
$$\Phi_\B: C^*(\B)\rtimes_{\delta_\B}\widehat{G}\rtimes_{\widehat{\delta_\B}}G\onto C^*(\B)\otimes \K(L^2(G))$$
will actually be an isomorphism. Coactions with this property have been called {\em maximal} in \cite{Echterhoff-Kaliszewski-Quigg:Maximal_Coactions}, where
it has first been shown that every coaction $(B,\delta)$ admits a {\em maximalisation} $(B_m,\delta_m)$.
If $G$ is discrete and $(B, \delta)$ is any coaction of $G$ on some \cstar{}algebra $B$, then it follows from results of Ng and Quigg in \cite{Ng:Discrete-coactions,Quigg:Discrete-coactions} (see also \cite{Echterhoff-Quigg:InducedCoactions}) that $B$ is isomorphic to a \cstar{}completion $C_\mu^*(\B)$ of $C_c(\B)$ for some Fell bundle $\B$ with respect to some norm
$\|\cdot\|_\mu$ lying between the universal norm $\|\cdot\|_u$ and the reduced norm $\|\cdot\|_r$ such that
the coaction $\delta$ is the natural dual coaction of this algebra. It has then been shown in \cite{Echterhoff-Quigg:InducedCoactions} (see also \cite[\S4]{Echterhoff-Kaliszewski-Quigg:Maximal_Coactions}) that $\delta$ is maximal if and only if $B=C^*(\B)$, the universal cross-sectional algebra. This gives the desired result in the discrete case.

For second countable locally compact groups the result that $(C^*(\B),\delta_\B)$ is a maximal coaction has been obtained in the special case of \emph{separable saturated} Fell bundles by Kaliszewski, Muhly, Quigg and Williams in \cite{Kaliszewski-Muhly-Quigg-Williams:Coactions_Fell}.
Separability is not a strong assumption, but note that Fell bundles arising from (twisted) partial actions are saturated if and only if the action
is actually a global (twisted) action, so that there are many important examples of Fell bundles which are not saturated.
Moreover, the proof given in \cite{Kaliszewski-Muhly-Quigg-Williams:Coactions_Fell} relies on some heavy machinery about Fell bundles on groupoids while our proof depends on  the notion of generalised fixed-point algebras for weakly proper actions as introduced
recently in \cite{Buss-Echterhoff:Exotic_GFPA}. Both proofs depend on non-trivial results, but we believe that our proof is much shorter and technically easier than the proof given for the special case of saturated bundles in \cite{Kaliszewski-Muhly-Quigg-Williams:Coactions_Fell}. We should point out, however, that for the special case of Fell bundles associated to partial actions, the maximality result can also be deduced from the paper \cite{Abadie-Perez:Amenability} by Fernando Abadie and Laura Mart\'i P\'erez. Indeed, as we will see in Section~\ref{Sec:Partial-Actions}, the maximality of the dual coaction on the \cstar{}algebra $C^*(\B)$ of a Fell bundle associated to a partial action $(A,\alpha)$ is essentially equivalent to the fact (proven in \cite{Abadie-Perez:Amenability}) that the full crossed product of the Morita enveloping action of $(A,\alpha)$ is Morita equivalent to the original full crossed product by the partial action. Hence our main result will provide an alternative proof of one of the main results in \cite{Abadie-Perez:Amenability}.

The paper is organised as follows. After a short preliminary section (\S2) on cross-sectional algebras, coactions, and generalised fixed-point algebras for weakly proper actions we shall give the proof of our main result (Theorem~\ref{theo:maximal-dual-coaction}) in \S3. We will then have a number of interesting applications, starting from extensions of crossed-product functors from ordinary actions to Fell bundle categories, $K$\nb-amenability for cross-sectional \cstar{}algebras and some applications to partial actions. Our results imply, in particular, that one can extend exotic crossed-product functors from ordinary actions to partial actions. In other words, given a crossed-product functor $(A,\alpha)\mapsto A\rtimes_{\alpha,\mu}G$ defined only for (global) $G$-actions $(A,\alpha)$, we can extend this functor and define $A\rtimes_{\alpha,\mu}G$ for every given partial $G$-action $(A,\alpha)$. More generally, we can extend the functor to the category of Fell bundles over $G$, and define exotic versions $C^*_\mu(\B)$ of cross-sectional \cstar{}algebras of Fell bundles $\B$ over $G$. These include partial actions or, more generally, twisted partial actions.
We give an alternative proof and recover some of the main results on enveloping actions and amenability for partial actions from \cite{Abadie-Perez:Amenability}. More generally, we prove that all exotic cross-sectional \cstar{}algebras $C^*_\mu(\B)$ have same $K$-theory if the underlying group is $K$-amenable.

In the final section, we give a simple characterisation of maximal coactions of discrete groups (which is not available for general locally compact groups), proving that a coaction $\delta\colon B\to B\otimes C^*(G)$ of a discrete group $G$ is maximal exactly when it admits a lift $\delta_\max\colon B\to B\otimes_\max C^*(G)$, where $\otimes_\max$ denotes the maximal tensor product. (And, as usual, all the unlabeled tensor products $\otimes$ between \cstar{}algebras mean the minimal tensor product.)

This work started during a visit of the second author to Florian\'opolis to participate in the FADYS (Functional Analysis and Dynamical Systems) Workshop. It was during the talk of the second author in this Workshop that Ruy Exel asked the question whether the theory of exotic crossed products could be extended to cover partial actions as well. One of the main points of this paper is to give a positive answer to this question. We would like to thank Ruy Exel for asking this interesting question and make this paper emerge.
The second author takes this opportunity to thank the members of the Department of Mathematics of UFSC for organising the workshop and for their warm
hospitality during his stay in Florian\'opolis.

\section{Preliminaries}\label{sec-prel}
\subsection{Fell bundles and their cross-sectional algebras.}
Suppose that $p:\B\to G$ is a Fell bundle over the locally compact group $G$ as
defined in the introduction. Main references on Fell bundles and their cross-sectional algebras are the books by Doran and Fell\cite{Doran-Fell:Representations,Doran-Fell:Representations_2} and we refer to these books for  more details of the definition
(see also the book by Exel \cite{Exel:Book}). Let $C_c(\B)$ be the set of all continuous sections
of $\B$ with compact supports. Then, equipped with  convolution and involution as in (\ref{eq-conv}),
 $C_c(\B)$ becomes a \Star{}algebra. Let $L^1(\B)$ denote the completion of $C_c(\B)$ with respect to
 $\|f\|_1=\int_G\|f(t)\|\,dt$. Then the universal cross-sectional algebra $C^*(\B)$ is defined as the enveloping
 \cstar{}algebra of the Banach-* algebra $L^1(\B)$, i.e., it is the completion of $L^1(\B)$ with respect to
 $$\|f\|_u:=\sup_{\pi}\|\pi(f)\|$$
 where $\pi$ runs through all \Star{}representations of $L^1(\B)$ on Hilbert space. It has been shown by Fell
 \cite[\S 15--16]{Fell:Induced} that $C^*(\B)$ is universal for (continuous) \Star{}representations of $\B$, and since we shall
 need it later, let us explain (a modern version) of this result in some detail:
By a  {\em \Star{}representation} $\pi:\B\to \M(D)$ of $\B$ into the multiplier algebra
of  some \cstar{}algebra $D$ we understand a strictly continuous map $b\mapsto \pi(b)$
which is linear on each fibre $B_t$ and preserves multiplication and involution on $\B$.
Such representation is called {\em nondegenerate} if its restriction $\pi_e: B_e\to \M(D)$
on the unit fibre $B_e$ of $\B$ is nondegenerate in the usual sense that $\spn(\pi_e(B_e)D)$ is dense in $D$
(then Cohen's factorisation theorem  implies that $\pi_e(B_e)D=D$).
There is a canonical nondegenerate representation $\iota_\B:\B\to\M(C^*(\B))$ which is determined by the
formulas
$$(\iota_\B(b_t) f)(s)=b_tf(t^{-1}s)\quad\text{and}\quad (f\iota_\B(b_t))(s)=\Delta_G(t^{-1})f(st^{-1})b_t$$
 for $b_t\in B_t$, $f\in C_c(\B)$ and $t,s\in G$ (e.g., see \cite[p. 138]{Fell:Induced}).
We have the following well-known result:

 \begin{proposition}\label{prop-rep}
There are one-to-one correspondences between
\begin{enumerate}
\item nondegenerate representations $\pi: \B\to \M(D)$,
\item nondegenerate \Star{}representations $\tilde\pi: C_c(\B)\to \M(D)$ which are continuous
with respect to the inductive limit topology on $C_c(\B)$ and the norm-topology on $\M(D)$,
\item nondegenerate  \Star{}representations $\tilde\pi: L^1(\B)\to \M(D)$, and
\item nondegenerate \Star{}representations  $\tilde\pi: C^*(\B)\to \M(D)$.
\end{enumerate}
\end{proposition}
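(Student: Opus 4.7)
The plan is to organise the argument as a chain of equivalences $(1) \Leftrightarrow (2) \Leftrightarrow (3) \Leftrightarrow (4)$. The last two are essentially formal: $(3) \Leftrightarrow (4)$ is immediate from the definition of $C^*(\B)$ as the enveloping \cstar{}algebra of the Banach \Star{}algebra $L^1(\B)$, since any \Star{}representation of $L^1(\B)$ on Hilbert space is automatically $L^1$-contractive by the standard Banach-\Star{}algebra spectral-radius estimate applied to the self-adjoint element $f^* * f$, and hence extends uniquely to $C^*(\B)$. For $(2) \Leftrightarrow (3)$, the inductive-limit topology on $C_c(\B)$ is dominated by the $\|\cdot\|_1$-norm (one has $\|f\|_1 \le \mu(\supp f) \cdot \|f\|_\infty$), so restriction from $L^1(\B)$ yields an inductive-limit continuous \Star{}representation of $C_c(\B)$; conversely, any such representation is $L^1$-bounded by the same spectral-radius argument and extends continuously to $L^1(\B)$ by density. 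Nondegeneracy is preserved throughout since $C_c(\B)$ is dense in $L^1(\B)$ and in $C^*(\B)$.

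For $(1) \Rightarrow (2)$ I would introduce the \emph{integrated form} of $\pi$,
$$\tilde\pi(f) = \int_G \pi(f(t))\,dt, \qquad f \in C_c(\B),$$
the integral being taken in the strict topology of $\M(D)$. The elementary estimate $\|\tilde\pi(f)\| \le \|f\|_1$ yields both inductive-limit continuity and $L^1$-boundedness at once, while the \Star{}algebraic identities are verified by a direct Fubini-type computation using the formulas \eqref{eq-conv} together with the multiplicativity and \Star{}compatibility of $\pi$. Nondegeneracy of $\tilde\pi$ is deduced from that of $\pi_e = \pi|_{B_e}$: every $b \in B_e$ can be strictly approximated by integrated forms of sections supported in shrinking neighbourhoods of $e \in G$ and taking values close to $b$ in $B_e$.

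The converse $(4) \Rightarrow (1)$, which is the substantive content, I would obtain cleanly via the canonical representation $\iota_\B : \B \to \M(C^*(\B))$ supplied in the excerpt. Given a nondegenerate \Star{}representation $\tilde\pi : C^*(\B) \to \M(D)$, it extends uniquely to a strictly continuous unital \Star{}homomorphism $\overline{\tilde\pi} : \M(C^*(\B)) \to \M(D)$, and I set $\pi := \overline{\tilde\pi} \circ \iota_\B$. Strict continuity on $\B$ and the bundle-algebraic identities for $\pi$ are then inherited from the corresponding properties of $\iota_\B$. That the two assignments $(1) \leftrightarrow (4)$ are mutually inverse follows from the identity $\tilde\pi(f) = \int_G \overline{\tilde\pi}(\iota_\B(f(t)))\,dt$ on $C_c(\B)$, which is immediate from the defining formulas for $\iota_\B$.

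The principal obstacle in $(4) \Rightarrow (1)$ is verifying that $\pi$ is nondegenerate \emph{in the bundle sense}, i.e.\ that $\spn(\pi_e(B_e) D)$ is dense in $D$. This reduces to the density of $\spn(\iota_\B(B_e) \cdot C^*(\B))$ in $C^*(\B)$, which in turn follows from an approximate-identity argument in the \cstar{}algebra $B_e$: for $f \in C_c(\B)$ and $(u_\lambda)$ an approximate identity of $B_e$, the pointwise products $\iota_\B(u_\lambda) f$ converge to $f$ uniformly on compact subsets of $G$, and hence in the $L^1$- and universal norms.
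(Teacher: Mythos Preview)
Your argument has a genuine gap in the step $(2)\Rightarrow(3)$. The spectral-radius estimate you invoke for $(3)\Leftrightarrow(4)$ works because $L^1(\B)$ is a Banach \Star{}algebra: for self-adjoint $g=f^**f$ one has $\|\tilde\pi(g)\|=r_{\M(D)}(\tilde\pi(g))\le r_{L^1(\B)^+}(g)\le\|g\|_1$, the last inequality being the spectral-radius formula in a Banach algebra. But $C_c(\B)$ with the inductive-limit topology is \emph{not} a Banach algebra, and there is no reason for the spectrum of $g$ in the unitisation $C_c(\B)^+$ to lie in the disc of radius $\|g\|_1$: the Neumann-series inverse of $g-\lambda$ for $|\lambda|>\|g\|_1$ converges in $L^1(\B)^+$ but will in general not land back in $C_c(\B)^+$. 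So one cannot deduce $\|\tilde\pi(f)\|\le\|f\|_1$ from inductive-limit continuity alone by this route. (Incidentally, the sentence ``the inductive-limit topology is dominated by the $\|\cdot\|_1$-norm'' is stated backwards; the inequality you wrote shows the opposite implication, which is what you actually use for $(3)\Rightarrow(2)$.)

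The paper does not attempt $(2)\Rightarrow(3)$ directly. It closes the cycle via $(2)\Rightarrow(1)$ by invoking Fell's theorem \cite[Theorem~16.1]{Fell:Induced}: after representing $D$ faithfully on a Hilbert space, a nondegenerate inductive-limit continuous \Star{}representation of $C_c(\B)$ on Hilbert space comes from a continuous \Star{}representation of the bundle $\B$. This is a genuinely nontrivial result --- already for the trivial line bundle $\B=\C\times G$ it is the classical correspondence between inductive-limit continuous \Star{}representations of $C_c(G)$ and strongly continuous unitary representations of $G$ --- and it is precisely the substantive step you need. Your treatment of $(1)\Rightarrow(2),(3)$, of $(3)\Leftrightarrow(4)$, and of $(4)\Rightarrow(1)$ via $\iota_\B$ is correct and agrees with the paper's approach.
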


The correspondences in the proposition are given as follows:
If $\pi:\B\to \M(D)$ is as in (1), then the corresponding representation
$\tilde\pi:C_c(\B)\to \M(D)$ is given
by integration: $\tilde\pi(f)c:=\int_G \pi(f(t))c\,dt$ for all $f\in C_c(\B)$ and $c\in D$.
We call it the {\em integrated form} of $\pi$.
It is straightforward to check that it is continuous in the inductive limit topology
(given by uniform convergence with controlled compact supports) and with respect
to $\|\cdot \|_1$. This gives (1) $\rightarrow$ (2),(3).
By construction of the enveloping \cstar{}algebra,
every \Star{}representation of $L^1(\B)$ uniquely extends to $C^*(\B)$ which gives (3) $\rightarrow$ (4).
Conversely, if $\tilde\pi: C^*(\B)\to \M(D)$ is as in (4), then $\pi=\tilde\pi\circ \iota_B$ is the corresponding
representation as in (1). The only missing link is the connection (2) $\rightarrow$ (1). But this follows
from \cite[Theorem 16.1]{Fell:Induced} by representing $D$ faithfully on a Hilbert space.

In what follows, we shall make no notational difference between a representation $\pi$
of $\B$ and the corresponding representations of $C_c(\B), L^1(\B)$, and $C^*(\B)$.

Let $L^2(\mathcal B)$ denote the Hilbert module over $B_e$ given as the completion of $C_c(\mathcal B)$ with respect to the $B_e$-valued inner product
$$\lk \xi, \eta\rk_{B_e}:= (\xi^**\eta)(e) =\int_G\xi(t)^*\eta(t)\dd{t}.$$
Then the action of $C_c(\mathcal B)$ on itself given by convolution extends to
the
{\em regular representation} $\Lambda_{\mathcal B}:C^*(\B)\to\mathcal L(L^2(\mathcal B))$
by adjointable operators on the $B_e$-Hilbert module $L^2(\mathcal B)$ and the image
$C_r^*(\B):=\Lambda_\B(C^*(\B))\subseteq \mathcal L(L^2(\B))$ is called the
{\em reduced} cross-sectional \cstar{}algebra of $\B$.

\subsection{Coactions and their crossed products}
A coaction of a locally compact group on a \cstar{}algebra $B$ is a  nondegenerate \Star{}homomorphism
$\delta:B\to \M(B\otimes C^*(G))$ which satisfies the identity
\begin{equation}\label{eq-coact} (\id_B\otimes \delta_G)\circ \delta= (\delta\otimes \id_G)\circ \delta,\end{equation}
where $\delta_G: C^*(G)\to \M(C^*(G)\otimes C^*(G))$ is the comultiplication on $C^*(G)$ which is given
by the integrated form of the representation $s\mapsto u(s)\otimes u(s)\in U\M(C^*(G)\otimes C^*(G))$, where
$u:G\to U\M(C^*(G))$ denotes the universal representation of $G$.
Note that in (\ref{eq-coact}) (and in many other places) we make no notational difference between a nondegenerate \Star{}homomorphism and its unique extension to the multiplier algebra.
We shall assume that our coactions $\delta$ always satisfy the following (strong) nondegeneracy condition
$$\delta(B)(1\otimes C^*(G))=B\otimes C^*(G).$$
If $\delta: B\to \M(B\otimes C^*(G))$ is a coaction of $G$ we let
 $\delta^r: = (1\otimes\lambda)\circ \delta: B\to \M(B\otimes C_r^*(G))$ denote the {\em reduction} of $\delta$,
where $\lambda: G\to U(L^2(G))$ is the regular representation of $G$,  and we let $M:C_0(G)\to \BB(L^2(G))$
be the representation by multiplication operators.
We then may represent the crossed product $B\rtimes_\delta\widehat{G}$ faithfully in
$\M(B\otimes \K(L^2(G)))$ via the regular representation
$\Lambda^{\widehat{G}}_{B}:=\delta^r\rtimes (1\otimes M)$.
Hence, via this representation, we may define
$$B\rtimes_{\delta}\widehat{G}:=\overline{\spn}\left\{\delta^r(B)\big(1\otimes M(C_0(G))\big)\right\}
\subseteq \M(B\otimes \K(L^2(G))).$$
Since locally compact groups are always ``co-amenable'' this ``reduced crossed product'' coincides
with the ``universal crossed product'' which is universal for covariant representations of the co-system
$(B,\delta)$. Note that in the notation of crossed products by coactions we use the symbol $\widehat{G}$ to indicate
that this construction is {\em dual} to the construction of crossed products by actions of $G$. We refer to
\cite[Appendix A]{Echterhoff-Kaliszewski-Quigg-Raeburn:Categorical} for an extensive survey on crossed products by actions and coactions of groups on \cstar{}algebras.

The \emph{dual action} $\widehat{\delta}:G\to \Aut(B\rtimes_{\delta}\widehat{G})$ is determined by the equation
$$\widehat{\delta}_s\big(\delta^r(b)(1\otimes M(\varphi))\big)=\delta^r(b)(1\otimes M(\sigma_s(\varphi))),\quad \forall b\in B, \varphi\in C_0(G),$$
 where $\sigma:G\to \Aut(C_0(G))$ denotes the {\em right translation action}.
One checks  that $(\Lambda_{B}^{\widehat{G}}, 1\otimes \rho)$ is a covariant representation
of the dual system $(B\rtimes_{\delta}\widehat{G}, G, \widehat{\delta})$ on $\M(B\otimes \K(L^2(G)))$
and it follows from \cite[Corollary 2.6]{Nilsen} that the integrated form of this representation gives a surjective \Star{}homomorphism
$$\Phi_B\defeq\big((\id\otimes\lambda)\circ \delta_\A\rtimes(1\otimes M)\big)\rtimes (1\otimes \rho)\colon B\rtimes_{\delta}\widehat{G}\rtimes_{\widehat{\delta}}G\onto B\otimes \K(L^2(G)).$$
The map $\Phi_B$ might be called the {\em Katayama-duality map}.
Now, following \cite{Echterhoff-Kaliszewski-Quigg:Maximal_Coactions} a coaction $(B,\delta)$ is called {\em maximal} if the homomorphism $\Phi_B$ is an isomorphism.

On the other extreme, a coaction $(B,\delta)$ of $G$
 is called {\em normal} if the surjection $\Phi_B$ factors through an isomorphism
$$B\rtimes_{\delta}\widehat{G}\rtimes_{\widehat{\delta},r}G\cong B\otimes \K(L^2(G)).$$
It has been shown by Quigg in \cite{Quigg:Discrete-coactions} that every coaction $(B,\delta)$ has a {\em normalisation} $(B_n,\delta_n)$, which can be constructed by passing from $B$ to the
quotient $B_n:=B/\ker\delta^r$.  In particular, it follows that $(B,\delta)$ is normal if and only if its reduction $\delta^r$ is injective. On the other hand it has been shown in \cite{Echterhoff-Kaliszewski-Quigg:Maximal_Coactions} that every coaction also
 has a  {\em maximalisation} $(B_m,\delta_m)$ such that there exist $\widehat{G}$-equivariant surjections
$B_m\onto B\onto B_n$
which induce isomorphisms between the respective coaction-crossed products.

For later use we need the construction of the maximalisation and normalisation of  $(B,\delta)$
as given in \cite{Buss-Echterhoff:Exotic_GFPA}, using the notion of generalised fixed-point algebras for weakly proper actions.
In what follows let us write $(j_B, j_{C_0(G)})$  for the covariant representation $(\delta^r, 1\otimes M)$ when viewed as
a  representation into $\M(B\rtimes_\delta \widehat{G})$. It is then clear that
$j_{C_0(G)}:C_0(G)\to \M(B\rtimes_\delta \widehat{G})$ is a nondegenerate $\sigma-\widehat{\delta}$-equivariant
\Star{}homomorphism which gives $(B\rtimes_{\delta}\widehat{G}, G, \widehat{\delta})$ the structure of a
weakly proper $G\rtimes G$-algebra in the sense of \cite{Buss-Echterhoff:Exotic_GFPA}. For simpler notation let us write
 $A:=B\rtimes_{\delta}\widehat{G}$ and we put $\varphi\cdot m:=j_B(\varphi)m$ and $m\cdot\varphi:=mj_B(\varphi)$
 for all $m\in \M(A), \varphi\in C_0(G)$. Moreover, let $A_c:=C_c(G)\cdot A\cdot C_c(G)$, which is a dense \Star{}subalgebra
 of $A$, and let
 \begin{equation}
 A_c^G:=\{m\in \M(A)^G: m\cdot \varphi, \varphi\cdot m\in A_c\; \forall \varphi\in C_c(G)\},
 \end{equation}
where $\M(A)^G$ denotes the set of fixed-points in $\M(A)$ for the extended action $\widehat{\delta}$.
 We call $A_c^G$ the {\em generalised fixed-point algebra with compact supports}.
 Following ideas of Rieffel (\cite{Rieffel:Proper, Rieffel:Integrable_proper}), it has then been shown in \cite[Proposition 2.2]{Buss-Echterhoff:Exotic_GFPA} that
 $\F_c(A):=C_c(G)\cdot A$ can be made into a pre-Hilbert $C_c(G,A)$ module by defining a $C_c(G,A)$-valued inner product on $\F_c(A)$ and a
 right action of $C_c(G,A)$ on $\F_c(A)$ by
 $$\lk \xi, \eta\rk_{C_c(G,A)}=\big[s\mapsto \Delta_G(s)^{-\frac12}\xi^*\widehat{\delta}_s(\eta)\big]
 \quad\text{and}\quad\xi\cdot \varphi=\int_G\Delta(t)^{-\frac12}\alpha_t(\xi\varphi(t^{-1}))\,dt$$
for $\xi,\eta\in \F_c(A)$ and $\varphi\in C_c(G,A)$.
Let $A\rtimes_{\widehat{\delta}, \mu}G$ be any \cstar{}completion of $C_c(G,A)$ with respect to a \cstar{}norm
$\|\cdot\|_\mu$ on $C_c(G,A)$ such that $\|\cdot\|_u\geq\|\cdot\|_\mu\geq \|\cdot\|_r$, where $\|\cdot\|_u$ and $\|\cdot\|_r$ denote
the universal (i.e.,  maximal) and the reduced norm on $C_c(G,A)$, respectively.
Then the above defined inner product takes values in $A\rtimes_{\widehat\delta,\mu}G$ and
the completion $\F_\mu(A)$ of $\F_c(A)$ with respect to this  inner product
 becomes a full $A\rtimes_{\widehat\delta, \mu}G$-Hilbert module (the module is full since the translation action of $G$ on itself
 is free and proper).
Now, if we  define a left action of $A_c^G$ on $\F_c(A)$ by taking products inside $\M(A)$ (where both spaces are
located), this action extends to a faithful \Star{}homomorphism $\Psi_\mu: A_c^G\to\K(\F_\mu(A))$ with dense image.
Hence $A_\mu^G:=\K(\F_\mu(A))$ can be viewed as the completion of $A_c^G$ with respect to the
operator norm for the left action of $A_c^G$ on $\F_\mu(A)$. In particular, $\F_\mu(A)$ becomes a
$A_\mu^G-A\rtimes_{\widehat{\delta},\mu}G$-equivalence bimodule.

Moreover, if the dual coaction on $A\rtimes_{\widehat\delta}G$ factors through a dual coaction on $A\rtimes_{\widehat{\delta},\mu}G$
 (a property which depends on the norm $\|\cdot\|_\mu$), it is shown in
\cite[Theorem 4.6]{Buss-Echterhoff:Exotic_GFPA} that there are canonical coactions $\delta_{A_\mu^G}$ and $\delta_{\F_\mu(A)}$ of $G$
on $A_\mu^G$ and  $\F_\mu(A)$, respectively,  such that
$(\F_\mu(A), \delta_{\F_\mu(A)})$ becomes a $\widehat{G}$-equivariant Morita equivalence between
$(A_\mu^G, \delta_{A_\mu^G})$ and $(A\rtimes_{\widehat{\delta},\mu}G, \widehat{\widehat\delta\,})$.
It is shown in \cite[Lemma 4.8]{Buss-Echterhoff:Exotic_GFPA} that there exists a unique crossed-product norm $\|\cdot\|_\mu$
on $C_c(G,A)$ such that $(A_\mu^G, \delta_{A_\mu^G})$ is isomorphic to the original coaction $(B,\delta)$.
Moreover, if  $\|\cdot\|_\mu=\|\cdot\|_u$ is the universal norm on $C_c(G,A)$, then the
corresponding system  $(B_m,\delta_m):=(A_u^G, \delta_{A_u^G})$ is a maximalisation for $(B,\delta)$ and
if $\|\cdot\|_\mu=\|\cdot\|_r$ is the reduced norm, then
 $(B_n,\delta_n):=(A_r^G, \delta_{A_r^G})$  is a normalisation of
$(B,\beta)$.  Identifying $(B,\beta)$ with $(A_\mu^G, \delta_{A_\mu^G})$ as above, the
identity map on $A_c^G$ induces the $\widehat{G}$-equivariant surjections $B_m\onto B\onto B_n$
which induce isomorphisms of crossed products
$$B_m\rtimes_{\delta_m}\widehat{G}\cong B\rtimes_{\delta}\widehat{G}\cong B_n\rtimes_{\delta_n}\widehat{G}.$$

\section{The main result}
Assume that $p:\B\to G$ is a Fell bundle over the locally compact group $G$. Then there is a canonical coaction
$$\delta_\B: C^*(\B)\to \M(C^*(\B)\otimes C^*(G)),$$
called the \emph{dual coaction} of $G$ on $C^*(\B)$, given as the integrated form of the \Star{}re\-pre\-sen\-ta\-tion $\delta_\B:\B\to \M(C^*(\B)\otimes C^*(G))$ which sends $B_t\ni b_t\mapsto \iota_\B(b_t)\otimes u_t$, where $\iota_\B:\B\to \M(C^*(\B))$ is the
universal representation of $\B$ and  $u:G\to U\M(C^*(G))$ is the universal representation of $G$.

\begin{theorem}\label{theo:maximal-dual-coaction}
Let $\B$ be a Fell bundle over the locally compact group $G$.
Then the dual coaction $\delta_\B\colon C^*(\B)\to \M(C^*(\B)\otimes C^*(G))$ is maximal.
\end{theorem}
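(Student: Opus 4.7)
The plan is to apply the generalised fixed-point algebra machinery of \cite{Buss-Echterhoff:Exotic_GFPA} recalled at the end of \S2. Set $A := C^*(\B)\rtimes_{\delta_\B}\widehat{G}$ equipped with its dual action $\widehat{\delta_\B}$, and regard it as a weakly proper $G$\nb-algebra via $j_{C_0(G)}=1\otimes M:C_0(G)\to\M(A)$. By \cite[Lemma 4.8]{Buss-Echterhoff:Exotic_GFPA}, the maximalisation $(B_m,\delta_m)$ of $(C^*(\B),\delta_\B)$ is realised as $(A^G_u,\delta_{A^G_u})=(\K(\F_u(A)),\delta_{A^G_u})$, with $\F_u(A)$ a $\widehat G$\nb-equivariant Morita equivalence to the full crossed product $A\rtimes_{\widehat{\delta_\B},u}G$. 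Under this identification, the canonical maximalisation surjection takes the form $\pi\colon A^G_u\onto C^*(\B)$, and maximality is equivalent to $\pi$ being injective. Our task therefore reduces to exhibiting a \Star{}homomorphism $\sigma\colon C^*(\B)\to A^G_u$ with $\pi\circ\sigma=\id_{C^*(\B)}$.

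To construct $\sigma$, I would start from the natural embedding $j_{C^*(\B)}=\delta_\B^r\colon C^*(\B)\hookrightarrow \M(A)^G$, and show that upon restriction to the dense subalgebra $C_c(\B)\subset C^*(\B)$ it lands inside $A_c^G$. This is a direct computation: for $f\in C_c(\B)$ and $\varphi\in C_c(G)$, one unpacks the formulas for $\delta_\B^r$ and for $j_{C_0(G)}$ (with $\widehat{\delta_\B}$ acting by right translation on the $C_0(G)$\nb-slot) to see that both $j_{C_0(G)}(\varphi)\cdot \delta_\B^r(\iota_\B(f))$ and $\delta_\B^r(\iota_\B(f))\cdot j_{C_0(G)}(\varphi)$ lie in $C_c(G)\cdot A\cdot C_c(G)=A_c$. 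This produces an algebraic \Star{}homomorphism $\sigma_0\colon C_c(\B)\to A_c^G$ with $\pi\circ\sigma_0=\id_{C_c(\B)}$.

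Next, I would promote $\sigma_0$ to the desired $\sigma$. For this I need $\sigma_0$ to be bounded with respect to the universal norm $\|\cdot\|_u$ on $C_c(\B)$ and the operator norm of the left action on $\F_u(A)$, which is the norm defining $A^G_u$. The point here is to use Proposition~\ref{prop-rep}: any nondegenerate \Star{}representation $\pi\colon\B\to\M(D)$ produces a covariant representation $(\widetilde\pi,1\otimes\lambda,1\otimes M)$ of the dual system $(A,G,\widehat{\delta_\B})$ on $\M(D\otimes\K(L^2(G)))$, whose integrated form gives a representation of $A\rtimes_{\widehat{\delta_\B},u}G$. Via the Morita equivalence $\F_u(A)$ this induces a representation of $A^G_u$ whose composition with $\sigma_0$ recovers $\pi$. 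Letting $\pi$ range over all representations of $\B$ shows $\|\sigma_0(f)\|_{A^G_u}\geq\|f\|_u$; the reverse inequality is automatic from the existence of $\pi$. Hence $\sigma_0$ extends to an isometric \Star{}homomorphism $\sigma\colon C^*(\B)\to A^G_u$ with $\pi\circ\sigma=\id$, which forces $\pi$ to be an isomorphism. Equivariance with respect to the coactions is then a routine check on the dense subalgebra $C_c(\B)$.

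The main technical obstacle will be the middle step: verifying that $\delta_\B^r(\iota_\B(f))$ really has ``compact support'' in the $C_0(G)$\nb-direction inside $\M(A)$, i.e., that the products with elements of $C_c(G)$ land in $A_c$ rather than just in $A$. The subtlety comes from the fact that $\iota_\B(f)$ is a multiplier of $C^*(\B)$ (not an element), so one has to identify where $\delta_\B^r(\iota_\B(f))\cdot(1\otimes M(\varphi))$ sits as an element of $A=C^*(\B)\rtimes_{\delta_\B}\widehat G\subset\M(C^*(\B)\otimes\K(L^2(G)))$; the support condition should reduce, via the identification $A\cong \cspan\{\delta_\B^r(C^*(\B))(1\otimes M(C_0(G)))\}$, to the standard fact that $\varphi\mapsto\varphi\cdot(\lambda(t)\varphi')$ has compact support whenever $\varphi,\varphi'\in C_c(G)$, multiplied by the fact that $\iota_\B(f)$ has fibrewise compact support in $G$ through its defining formula.
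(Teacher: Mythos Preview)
Your overall strategy—building a \Star{}homomorphism from $C^*(\B)$ into $A_u^G$ via $\delta^r$ on $C_c(\B)$, and your identification of the technical obstacle (checking $\delta^r(C_c(\B))\subseteq A_c^G$)—matches the paper's approach. But the argument as written has two genuine gaps.

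First, and most seriously, the reduction is wrong: having a section $\sigma$ of the maximalisation surjection $\pi$ does \emph{not} force $\pi$ to be an isomorphism (a surjection with a one-sided right inverse can still have nontrivial kernel; think of a projection $C\oplus D\onto C$). What is needed in addition is that $\sigma$ be \emph{surjective}, i.e.\ that $\delta^r(C_c(\B))$ be dense in $A_u^G$. This is in fact the heart of the paper's proof and is not addressed in your proposal: one shows that $\spn\big(\delta^r(C_c(\B))(1\otimes M(C_c(G)))\big)$ is inductive-limit dense in $A_c$ via an explicit partition-of-unity approximation, and then deduces inductive-limit density of $\delta^r(C_c(\B))$ in $A_c^G$ using the surjective expectation $\EE\colon A_c\to A_c^G$. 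With surjectivity in hand, the paper does not argue via the section property at all; instead it checks $\delta_\B$--$\delta_{A_u^G}$ equivariance of $\Psi$, shows that $\Psi\rtimes\widehat{G}$ is an isomorphism by factoring through the normalisation map $\delta^r\colon C^*(\B)\to A_r^G$, and then invokes Lemma~\ref{lem-max}.

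Second, your extension argument has the inequalities reversed. To extend $\sigma_0$ continuously to $C^*(\B)$ you need $\|\sigma_0(f)\|_{A_u^G}\le\|f\|_u$. But the existence of the contractive maximalisation map $\pi$ gives only $\|f\|_u=\|\pi(\sigma_0(f))\|\le\|\sigma_0(f)\|_{A_u^G}$, the opposite direction; and your Morita-induction construction (manufacturing a representation of $A_u^G$ from one of $\B$ and then recovering the latter by composing with $\sigma_0$) likewise only bounds $\|f\|_u$ from above by $\|\sigma_0(f)\|_{A_u^G}$. The paper obtains the needed extension differently and more directly: it verifies that $\delta^r\colon C_c(\B)\to A_c^G$ is continuous for the inductive-limit topologies (so that inductive-limit convergence in $C_c(\B)$ forces norm convergence in $A_u^G$), and then applies Proposition~\ref{prop-rep} to produce the \Star{}homomorphism $\Psi\colon C^*(\B)\to A_u^G$.
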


Let $(B,\delta)\defeq (C^*(\B),\delta_\B)$ and let $(B_m,\delta_m)$ be the maximalisation of $(B,\delta)$ as
constructed from $(B,\delta)$ in the previous section. We will show that there exists a $\delta-\delta_m$-equivariant
surjection $\Psi: B\onto B_m$ which induces an isomorphism of crossed products $B\rtimes_\delta \widehat{G}\cong B_m\rtimes_{\delta_m}\widehat{G}$. The result will then follow from the following easy lemma, which should be well known to the experts:

\begin{lemma}\label{lem-max} Let $(B,\delta)$ and $(B_m,\delta_m)$ be coactions of $G$ with $(B_m,\delta_m)$ maximal.
Suppose  that $\Psi:B\onto B_m$ is a
$\delta-\delta_m$-equivariant surjection which induces an isomorphism of crossed products.
Then $\Psi$ is an isomorphism and $(B,\delta)$ is maximal as well.
\end{lemma}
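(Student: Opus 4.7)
The plan is to exploit naturality of the Katayama-duality map $\Phi_{(-)}$ with respect to equivariant morphisms of coactions, together with the hypothesis that $\Psi$ induces an isomorphism of crossed products. Since $\Psi\colon B\onto B_m$ is $\delta$-$\delta_m$-equivariant, it induces a $\widehat\delta$-$\widehat{\delta_m}$-equivariant \Star{}homomorphism $\Psi\rtimes\widehat G \colon B\rtimes_\delta\widehat G \to B_m\rtimes_{\delta_m}\widehat G$, which by hypothesis is an isomorphism. Taking the crossed product by the dual $G$-actions (a functorial operation which preserves isomorphisms) yields an isomorphism
\[
\Psi\rtimes\widehat G\rtimes G\colon B\rtimes_\delta\widehat G\rtimes_{\widehat\delta}G \congto B_m\rtimes_{\delta_m}\widehat G\rtimes_{\widehat{\delta_m}}G.
\]

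The first substantive step is to observe that the Katayama-duality maps fit into a commutative diagram
\[
\begin{CD}
B\rtimes_\delta\widehat G\rtimes_{\widehat\delta}G @>\Psi\rtimes\widehat G\rtimes G>\cong> B_m\rtimes_{\delta_m}\widehat G\rtimes_{\widehat{\delta_m}}G\\
@V\Phi_BVV @VV\Phi_{B_m}V\\
B\otimes\K(L^2(G)) @>>\Psi\otimes\id> B_m\otimes\K(L^2(G)).
\end{CD}
\]
Commutativity is just naturality of $\Phi$: by construction $\Phi_B=((\id\otimes\lambda)\circ\delta\rtimes(1\otimes M))\rtimes(1\otimes\rho)$ is assembled from the reduction of $\delta$, the multiplication representation $M\colon C_0(G)\to \BB(L^2(G))$, and the right regular representation $\rho$; only the first ingredient depends on the coaction, and $\Psi$ intertwines the reductions $\delta^r$ and $\delta_m^r$ by equivariance. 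Chasing an element of the form $\delta^r(b)(1\otimes M(\varphi))(1\otimes\rho_s)$ through both ways around the square gives $(\Psi\otimes\id)\Phi_B(\,\cdot\,)=\delta_m^r(\Psi(b))(1\otimes M(\varphi))(1\otimes\rho_s)=\Phi_{B_m}((\Psi\rtimes\widehat G\rtimes G)(\,\cdot\,))$.

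Now we read off the conclusion. The top horizontal arrow is an isomorphism, and $\Phi_{B_m}$ is an isomorphism by maximality of $(B_m,\delta_m)$; hence the composite $\Phi_{B_m}\circ(\Psi\rtimes\widehat G\rtimes G)=(\Psi\otimes\id)\circ\Phi_B$ is an isomorphism. Since $\Phi_B$ is already surjective (this is part of the general theory of the Katayama map), the injectivity of the composite forces $\Phi_B$ to be injective, so $\Phi_B$ is an isomorphism; this is precisely maximality of $(B,\delta)$. With $\Phi_B$ invertible, the identity $(\Psi\otimes\id)=\Phi_{B_m}\circ(\Psi\rtimes\widehat G\rtimes G)\circ\Phi_B^{-1}$ exhibits $\Psi\otimes\id_{\K(L^2(G))}$ as an isomorphism, and therefore $\Psi$ itself is an isomorphism.

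The only nontrivial point in the plan is the naturality square, which one might expect to be the main obstacle; but since $\Phi_B$ is built from ingredients that depend on $B$ only through $\delta$, naturality is essentially formal once one unpacks the definition. Everything else is a diagram chase.
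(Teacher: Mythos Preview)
Your proof is correct and follows essentially the same approach as the paper: both set up the commutative square relating the Katayama maps $\Phi_B$, $\Phi_{B_m}$ with $\Psi\rtimes\widehat G\rtimes G$ and $\Psi\otimes\id$, then use that two of the arrows are isomorphisms and $\Phi_B$ is surjective to conclude. You give a bit more detail on why the square commutes, but the argument is the same.
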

\begin{proof} Since $\Phi:B\onto B_m$ is $\delta-\delta_m$-equivariant, we obtain a commutative diagram
$$
\begin{CD} B\rtimes_{\delta}\widehat{G}\rtimes_{\widehat\delta}G @> \Phi_B >> B\otimes \K(L^2(G))\\
@V \Psi\rtimes\widehat{G}\rtimes G VV                       @VV \Psi\otimes\id_{\K(L^2(G))} V\\
B_m\rtimes_{\delta_m}\widehat{G}\rtimes_{\widehat\delta_m}G @> \Phi_{B_m} >> B_m\otimes \K(L^2(G)).
\end{CD}
$$
By our assumptions, the left vertical  and the lower horizontal arrows are isomorphisms. It then follows that
the upper horizontal arrow has to be injective. Since it is always surjective, it must be an isomorphism.
Hence $(B,\delta)$ is maximal. Moreover, it follows that the right vertical arrow is an isomorphism which
then implies that $\Psi:B\to B_m$ must be an isomorphism as well.
\end{proof}

\begin{proof}[Proof of Theorem \ref{theo:maximal-dual-coaction}]
Let $(B,\delta)\defeq (C^*(\B),\delta_\B)$ and let
$$A:=B\rtimes_\delta \dualG=\overline{\spn}\left\{\delta^r(B)\big(1\otimes M(C_0(G))\big)\right\}$$
As explained in the previous section, we view $A$ as a weakly proper $G\rtimes G$-algebra.
Then, as explained above, the maximalisation of $(B,\delta)$ is given by the coaction $(B_m,\delta_m)=(A_u^G, \delta_{A_u^G})$
where $A_u^G$ denotes the universal generalised fixed-point algebra of $A$.
We will show that the restriction of $\delta^r $ to $C_c(\B)$ defines a \Star{}homomorphism
$\Psi: C_c(\B)\to A_c^G\subseteq \M(A)$ which extends to the desired $\delta-\delta_m$-equivariant
 surjective \Star{}homomorphism $\Psi: C^*(\B)\onto A_u^G$.
 First of all, it follows directly from the definition of the dual action $\widehat{\delta}$ that $\delta^r(B)$ lies in the
 fixed-point algebra $\M(A)^G$.  To see that it sends $C_c(\B)$ into the generalised fixed-point algebra $A_c^G$
 with compact supports it suffices to show that all elements of the form
 $\delta^r(b)(1\otimes M(f)), (1\otimes M(f))\delta^r(b)$ lie in $A_c=C_c(G)\cdot A\cdot C_c(G)$ for
 all $b\in C_c(\B)$ and $f\in C_c(G)$. For this
 we
first note  that $\delta^r=(1\otimes \lambda)\circ \delta_\B$
 is the integrated form of the representation $\delta^r:\B\to \M(C^*(B)\otimes \K(L^2(G))); b_t\mapsto \iota_\B(b_t)\otimes \lambda(t)$.
 Suppose now that $b\in C_c(\B)$ is a continuous section with compact support $K=\supp(b)$.
 Then, if $f\in C_c(G)$ is fixed,  we may choose a function $g\in C_c(G)$ such that $g\equiv 1$ on $K\cdot \supp(f)\cup\supp(f)$.
 Then for $a=\delta^r(b)(1\otimes M(f))$ we clearly have $a\cdot g=\delta^r(b)(1\otimes M(fg))=\delta^r(b)(1\otimes M(f))=a$.
On the other side, using  $\lambda_tM(g)\lambda_{t^{-1}}=M(\tau_t(g))$, where $\tau:G\to \Aut(C_0(G))$
denotes the left translation action of $G$ on itself, we compute
\begin{equation}\label{eq-Ac}
 \begin{split}
 g\cdot a&=(1\otimes M(g))\delta^r(b)(1\otimes M(f))\\
 &=\int_K (1\otimes M(g))(\iota_\B(b_t)\otimes \lambda_t)(1\otimes M(f))\,dt\\
 &=\int_K (\iota_\B(b_t)\otimes \lambda_t)(1\otimes M(\tau_{t^{-1}}(g)f))\,dt\\
 &= \int_K  (\iota_\B(b_t)\otimes \lambda_t)(1\otimes M(f))\,dt=a
 \end{split}
 \end{equation}
 since  for $t\in K$ and $s\in \supp(f)$ we have $\tau_{t^{-1}}(g)(s)=g(ts)=1$ since $ts\in K\cdot\supp(f)$.
 This proves that $\delta^r(C_c(\B))(1\otimes M(f))$ lies in $A_c$ and a similar argument also gives that
 $(1\otimes M(f))\delta^r(b)\in A_c$.

 Now we need to show that $\delta^r: C_c(\B)\to A_c^G$ extends to an equivariant surjective \Star{}homomorphism
 $\Psi:C^*(\B)\to A_u^G=\K(\F_u(A))$.
 For this we need to recall from \cite[Definition 2.6]{Buss-Echterhoff:Exotic_GFPA}
 the notion of convergence in the inductive limit topology on the spaces $A_c=C_c(G)\cdot A\cdot C_c(G)$,
 $\F_c(A)=C_c(G)\cdot A$ and $A_c^G$, respectively.  First of all, a sequence
 $(\xi_n)_{n\in \NN}$ in $\F_c(A)$ (resp. $A_c$) converges to $\xi\in \F_c(A)$ (resp. $\xi\in A_c$)
 in the inductive limit topology, if $\xi_n\to \xi$ in the norm topology of $A$ and  there exists a  function
 $g\in C_c(G)$ such that $\xi=g\cdot\xi$,  $\xi_n=g\cdot\xi_n$
 (resp. $\xi=g\cdot\xi\cdot g$,  $\xi_n=g\cdot\xi_n\cdot g$) for all $n\in \NN$.
For $A_c^G$, a sequence $(m_n)_{n\in \NN}$ in $A_c^G$ converges to $m\in A_c^G$ in the inductive limit topology
if for all $f\in C_c(G)$ we have $f\cdot m_n\to f\cdot m$ and $m_n\cdot f\to m\cdot f$ in the inductive limit
topology of $A_c$ (the fact that $G/G$ is a one-point set implies that this definition coincides with the one given in
\cite[Definition 2.6]{Buss-Echterhoff:Exotic_GFPA}). Now it is shown in \cite[Lemma 2.7]{Buss-Echterhoff:Exotic_GFPA} that all pairings in the
$A_c^G - C_c(G,A)$ pre-imprimitivity bimodule $\F_c(A)$ are jointly continuous with respect to the
inductive limit topologies, where on $C_c(G,A)$ we use the usual notion of inductive limit convergence.
Since inductive limit convergence in $C_c(G,A)$ is stronger than norm convergence with respect to any
given \cstar{}norm $\|\cdot\|_\mu$ on $C_c(G,A)$, it follows from this that the inductive limit topology on $A_c^G$ is
stronger than any norm topology induced on $A_c^G$ via the left action on the $A\rtimes_{\widehat{\delta},\mu}G$-Hilbert module
$\F_\mu(A)$. In particular,  inductive limit convergence in $A_c^G$ implies norm convergence
in $A_u^G$.

Assume now that $(b_n)_{n\in \NN}$ is a sequence in $C_c(\B)$ which converges to some $b\in C_c(\B)$ in
the inductive limit topology of $C_c(\B)$ (which means that $b_n\to b$ uniformly on $G$ and that
all $b_n$ have  supports in a fixed compact subset $K$ of $G$). Then the computation in (\ref{eq-Ac}) can easily
be modified to show that $\delta^r(b_n)\to \delta^r(b)$ in the inductive limit topology of $A_c^G$.
Thus $\delta^r(b_n)\to \delta^r(b)$ in the universal completion $A_u^G$. Thus we obtain a \Star{}representation
$\Psi: C_c(\B)\to A_u^G$ which is continuous for the inductive limit topology on $C_c(\B)$. But then Proposition \ref{prop-rep}
implies that $\Psi$ extends to a \Star{}homomorphism $\Psi: C^*(\B)\to A_u^G$.

To see that the image is dense, we first show that
$$\E_c:=\spn\big(\delta^r(C_c(\B))(1\otimes M(C_c(G)))\big)\subseteq A_c$$
 is
 inductive limit dense in $A_c$. Since $\E_c$ is norm dense in $A$, it is clear
 that
 $$\spn\big((1\otimes M(C_c(G)))\delta^r(C_c(\B))(1\otimes M(C_c(G)))\big)$$
  is inductive limit dense
 in $A_c$. Hence it suffices to show that every element of the form
 $(1\otimes M(g))\delta^r(b)(1\otimes M(f))$ with $f,g\in C_c(G)$ and $b\in C_c(\B)$ can be inductive limit approximated by
 elements in $\E_c$. By (\ref{eq-Ac}) we know that
 $$(1\otimes M(g))\delta^r(b)(1\otimes M(f))=\int_K (\iota_\B(b_t)\otimes \lambda_t)(1\otimes M(\tau_{t^{-1}}(g)f))\,dt,$$
 where $K=\supp(b)$. Now, for each $\eps>0$ and $t\in K$ we find a neighbourhood $V_t$ of $t$ in $G$
 such that $\|\tau_{s^{-1}}(g)f -\tau_{t^{-1}}(g)f\|_\infty<\eps$ for all $s\in V_t$. Let
 $t_1, \ldots , t_n$ be given such that $K\subseteq \cup_{l=1}^n V_{t_l}$, let $\varphi_1, \ldots ,\varphi_n$
 be a partition of unity for $K$ with $\supp\varphi_l\subseteq V_{t_l}$ for $1\leq l\leq n$, and let
 $b_l:=\varphi_l\cdot b$ (pointwise product).
 Then $a:=\sum_{l=1}^n \delta^r(b_l)(1\otimes M(\tau_{t_l^{-1}}(g)f))\in \E_c$ such that
 \begin{align*}
&\left\|(1\otimes M(g))\delta^r(b)(1\otimes M(f))-\sum_{l=1}^n \delta^r(b_l)(1\otimes M(\tau_{t_l^{-1}}(g)f))\right\|\\
&=\left\|\int_K (\iota_\B(b_s)\otimes \lambda_s)(1\otimes M(\tau_{s^{-1}}(g)f))
-\sum_{l=1}^n (\iota_\B(\varphi_l(s)b_s)\otimes \lambda_s)(1\otimes M(\tau_{t_l^{-1}}(g)f))\,dt\right\|\\
 &\leq \int_K \sum_{i=1}^n \varphi_l(s) \|\iota_\B(b_s)\|\| \tau_{s^{-1}}(g)f-\tau_{t_l^{-1}}(g)f\|_\infty dt
 \leq \eps \mu(K)\|b\|_\infty,
 \end{align*}
 where $\mu(K)$ denotes the Haar measure of $K$. One checks as before that for any function $\varphi\in C_c(G)$
 with $\varphi\equiv 1$ on $\supp(f)\cup K\cdot\supp(f)$ we have $\varphi\cdot a=a\cdot\varphi=a$, which
 now shows that $\E_c$ is inductive limit dense in $A_c$.

 Recall now from \cite[Lemma 2.3]{Buss-Echterhoff:Exotic_GFPA} that there is a surjective linear map
 $\EE: A_c\to A_c^G$ given by the equation $\EE(a)c=\int_G\widehat{\delta}_t(a)c\,dt$ for all $a,c\in A_c$
 such that for all $m\in A_c^G$ and $f\in C_c(G)$ we have $\EE(m\cdot f)=\EE(m)\EE_\tau(f)$,
 with $\EE_\tau(f):=\int_G f(t)\,dt$. For $m=\delta^r(b)$ with $b\in C_c(\B)$ we get
 $m\cdot f=\delta^r(b)(1\otimes M(f))$ and it follows that $\EE(\E_c)=\delta^r(C_c(\B))$.
A slight adaptation of the last part of the proof of \cite[Lemma 2.7]{Buss-Echterhoff:Exotic_GFPA} shows that
$\EE: A_c\to A_c^G$ is continuous for the inductive limit topologies.
Hence, since $\E_c$ is inductive limit dense in $A_c$ it now follows that
$\delta^r(C_c(\B))=\EE(\E_c)$ is inductive limit dense in $A_c^G$, hence norm dense
in $A_u^G$.

Hence $\delta^r\colon \contc(\B)\to A^G_c$ extends to a surjective \Star{}homomorphism $\Psi\colon C^*(\B)\to A^G_u$.
We now check that $\Psi$ is equivariant with respect to the dual coaction on $C^*(\B)$ and the coaction $\delta_{A^G_u}$ on $A^G_u$ as defined in \cite{Buss-Echterhoff:Exotic_GFPA} on the dense subspace $A_c^G$ by the formula:
$$\delta_{A^G_u}(m)=(\phi\otimes\id)(w_G)(m\otimes 1)(\phi\otimes\id)(w_G)^*$$
where $w_G\in \M(\contz(G)\otimes C^*(G))$ is the unitary given by the function $t\mapsto u_t$ and
$\phi=1\otimes M: C_0(G)\to \M(A)$.
Recall that the equivariance of $\Psi$ means the following equality:
\begin{equation}\label{eq:equivariant-coactions}
\delta_{A^G_u}(\Psi(b))=(\Psi\otimes \id)(\delta_\B(b))\quad \forall b\in \contc(\B).
\end{equation}
Using $\Psi=\delta^r$ on $C_c(\B)$, the right-hand side is given by
$$(\Psi\otimes\id)(\delta_\B(b))=(\Psi\otimes\id)\left(\int_G\iota_\B(b_t)\otimes u_t\dd{t}\right)=
\int_G\iota_\B(b_t)\otimes\lambda_t\otimes u_t\dd{t}.$$
To compare this with the left hand side, observe that since $\phi=1\otimes M$, we have $(\phi\otimes \id)(w_G)=1\otimes \tilde w_G$, where $\tilde w_G\defeq (M\otimes \id)(w_G)\in\M(\K(L^2(G))\otimes C^*(G))=\Lb\big(L^2(G,C^*(G))\big)$ is the unitary given by the formula $\tilde w_G\zeta(t)=u_t\zeta(t)$ for all $\zeta\in \contc(G,C^*(G))\sbe L^2(G,C^*(G))$ (here we view $L^2(G,C^*(G))=L^2(G)\otimes C^*(G)$ as a Hilbert module over $C^*(G)$ and write $\Lb\big(L^2(G,C^*(G))\big)$ for the \cstar{}algebra of adjointable operators on it). It follows that
\begin{multline*}
\delta_{A^G_u}(\Psi(b))=(\phi\otimes \id)(w_G)(\Psi(b)\otimes 1)(\phi\otimes\id)(w_G)^*\\
=(1\otimes\tilde w_G)\left(\int_G \iota_\B(b_t)\otimes\lambda_t\otimes 1 \dd{t}\right)(1\otimes \tilde w_G)^*
=\int_G \iota_\B(b_t)\otimes \tilde w_G(\lambda_t\otimes 1)\tilde w_G^*\dd{t}.
\end{multline*}
Now a simple computation shows that $\tilde w_G(\lambda_t\otimes 1)\tilde w_G^*=\lambda_t\otimes u_t$,  which then implies Equation~\eqref{eq:equivariant-coactions}.

To finish the proof we only need to check that $\Psi$ induces an isomorphism $\Psi\rtimes\dualG\colon C^*(\B)\rtimes_{\delta_\B}\dualG\congto A^G_u\rtimes_{\delta_{A^G_u}}\dualG$. But for every coaction $\delta\colon B\to \M(B\otimes C^*(G))$ it is known that the image of $\delta^r\colon B\to \M(B\rtimes_\delta \dualG)$ is the reduced generalised fixed point algebra $A^G_r$ for the weak $G\rtimes G$-algebra $A=B\rtimes_\delta\dualG$ (endowed with the dual action and the canonical embedding $\contz(G)\to \M(A)$). A first reference for this fact is Quigg's
original version of Landstad duality for coactions (see \cite{Quigg:Landstad_duality}).
We have shown in \cite{Buss-Echterhoff:Exotic_GFPA} that $A^G_r$ carries a coaction $\delta_{A^G_r}$
given on $A_c^G$ by the same formula as $\delta_{A_u^G}$
and that $(A^G_r,\delta_{A^G_r})$ is the normalisation of $(B,\delta)$ where $\delta^r \colon B\to A^G_r$ serves as the normalisation map. This in particular means that $\delta^r$ induces an isomorphism $\delta^r\rtimes\dualG\colon B\rtimes_\delta\dualG\congto A^G_r\rtimes_{\delta_{A^G_r}}\dualG$.
Now it is clear that the map $\Psi\colon C^*(\B)\to A^G_u$ constructed above composed with the normalisation map
$\nu: A^G_u\to A^G_r$ (given by the identity map on $A^G_c$) is the canonical map $\delta^r\colon C^*(\B)\to A^G_r$.
Hence it follows that the composition of the following sequence of maps
$$C^*(\B)\rtimes_{\delta_\B} \widehat{G}\stackrel{\Psi\rtimes\widehat{G}}{\longrightarrow} A_u^G\rtimes_{\delta_{A_u^G}}\widehat{G}
\stackrel{\nu\rtimes\widehat{G}}{\longrightarrow} A_r^G\rtimes_{\delta_{A^G_r}}\widehat{G}$$
is an isomorphism. Since $(A^G_r,\delta_{A^G_r})$ is also a normalisation for $(A^G_u,\delta_{A^G_u})$ and hence $\nu\rtimes\widehat{G}\colon A_u^G\rtimes_{\delta_{A_u^G}}\widehat{G} \to A_r^G\rtimes_{\delta_{A^G_r}}\widehat{G}$ is also an isomorphism, this implies the desired isomorphism $\Psi\rtimes\dualG\colon C^*(\B)\rtimes_{\delta_\B}\dualG\congto A^G_u\rtimes_{\delta_{A^G_u}}\dualG$.
\end{proof}

\begin{remark}\label{rem:regular=normalisation}
The normalisation of $(C^*(\B),\delta_\B)$ can be realised concretely as the \emph{dual coaction} $\delta_{\B,r}\colon C^*_r(\B)\to \M(C^*_r(\B)\otimes C^*(G))$ of $G$ on $C^*_r(\B)$, which is constructed as follows. Consider the regular representation $\Lambda_\B\colon C^*(\B)\to C^*_r(\B)$ and view it as a representation $\Lambda_\B\colon \B\to \M(C^*_r(\B))$ of $\B$. Now consider the tensor product representation $\Lambda_\B\otimes\lambda\colon \B\to \M(C^*_r(\B)\otimes C^*_r(G))$. By Fell's absorbtion theorem \cite{ExelNg:ApproximationProperty}*{Corollary~2.15}, the integrated form of this representation factors faithfully through $C^*_r(\B)$ and hence yields a faithful \Star{}homomorphism $\Lambda_\B\otimes\lambda\colon C^*_r(\B)\to\M(C^*_r(\B)\otimes C^*_r(G))$. It is not difficult to check directly (see \cite{ExelNg:ApproximationProperty}*{Proposition~2.10} for details) that this is a reduced coaction (that is, an injective coaction of the Hopf-\cstar{}algebra $C^*_r(G)$), and therefore it lifts to a normal coaction $\delta_{\B,r}\colon C^*_r(\B)\to \M(C^*_r(\B)\otimes C^*(G))$. This is the desired normalisation of the dual coaction $\delta_\B\colon C^*(\B)\to \M(C^*(\B)\otimes C^*(G))$, with the regular representation $\Lambda_\B\colon C^*(\B)\to C^*_r(\B)$ serving as the normalisation map (see \cite{Buss:thesis}*{Proposition~6.9.8}).
\end{remark}

\section{Some applications}

In this section we want to give some simple applications of our main Theorem~\ref{theo:maximal-dual-coaction}.

\subsection{Extension of exotic crossed-product functors}

Recall from \cite{Buss-Echterhoff-Willett:Exotic,Baum-Guentner-Willett:Expanders} that an exotic crossed-product functor is a functor $(A,\alpha)\mapsto A\rtimes_{\alpha,\mu}G$ from the category of $G$-\cstar{}algebras with $G$-equivariant \Star{}homomorphisms to the category of \cstar{}algebras lying between the full and reduced crossed-product functors $A\rtimes_\alpha G$, $A\rtimes_{\alpha,r}G$. More concretely, this means that $A\rtimes_{\alpha,\mu}G$ is a \cstar{}completion of the convolution \Star{}algebra $\contc(G,A)$ in such a way that the identity map $\contc(G,A)\to \contc(G,A)$ extends to surjective \Star{}homomorphisms
$$A\rtimes_\alpha G\onto A\rtimes_{\alpha,\mu}G\onto A\rtimes_{\alpha,r}G.$$
Theorem~\ref{theo:maximal-dual-coaction} allows us to extend every Morita compatible $G$-crossed-product functor $\rtimes_\mu$ to the category of Fell bundles over $G$, that is, we can extend the definition of $\rtimes_\mu$ to the realm of Fell bundles over $G$ in a natural and functorial way. Recall from \cite{Buss-Echterhoff-Willett:Exotic} that a crossed product functor is called {\em Morita compatible}\footnote{Also called ``strongly Morita compatible'' in \cite{Buss-Echterhoff-Willett:Exotic} to differentiate it from the formally weaker (but essentially equivalent) notion of Morita compatibility introduced in \cite{Baum-Guentner-Willett:Expanders}.} if
Morita equivalent actions are sent to (canonically) Morita equivalent crossed products. We refer to \cite{Buss-Echterhoff-Willett:Exotic} for
a detailed discussion of this property and for the  stronger notion of a correspondence functor. As shown there, many crossed-product
functors do have this property, and it follows from work of Okayasu (\cite{Okayasu:Free-group}) together with the papers
 \cite{Kaliszewski-Landstad-Quigg:Exotic, Buss-Echterhoff-Willett:Exotic} that there are uncountably many different correspondence functors for any discrete group which contains the free group in two generators.

We shall show that starting with a 
crossed-product functor $(A,G,\alpha)\mapsto A\rtimes_{\alpha,\mu}G$, then for every Fell bundle $\B$ over $G$ we can complete $\contc(\B)$ to a \cstar{}algebra $C^*_\mu(\B)$ lying between $C^*(\B)$ and $C^*_r(\B)$ in the sense that the identity map on $\contc(\B)$ extends to surjections
$$C^*(\B)\onto C^*_\mu(\B)\onto C^*_r(\B)$$
and such that the assignment $\B\to C^*_\mu(\B)$ is a functor from the category of Fell bundles over $G$ (with appropriate morphisms) to the category of \cstar{}algebras with \Star{}homomorphisms as morphisms.
We make this precise in what follows.

\begin{definition}\label{def-functor}
Given a crossed-product functor $\rtimes_\mu$ for a locally compact group $G$ and a Fell bundle $\B$ over $G$, we define  $C^*_\mu(\B)$ as the unique quotient of $C^*(\B)$ such that Katayama's duality map
$$\Phi_\B: C^*(\B)\rtimes_{\delta_\B}\widehat{G}\rtimes_{\widehat{\delta_\B}}G\congto C^*(\B)\otimes \K(L^2(G))$$
factors through an isomorphism
$$C^*(\B)\rtimes_{\delta_\B}\widehat{G}\rtimes_{\widehat{\delta_\B},\mu}G\cong C_\mu^*(\B)\otimes \K(L^2(G)).$$
\end{definition}

Although the above construction makes sense for every crossed-product functor, as we will see, it will
only give a completion $C^*_\mu(\B)$ with good properties if we assume that the given  functor $\rtimes_\mu$
has extra properties (for instance, Morita compatibility). We are specially interested in correspondence functors,
where essentially all good properties are present (see \cite{Buss-Echterhoff-Willett:Exotic}).

To make the construction $\B\to C^*_\mu(\B)$ into a functor, we need to introduce morphisms and turn Fell bundles over $G$ into a category. As for \cstar{}algebras, there are several types of morphisms we can consider between Fell bundles, but the most basic one is defined as follows.

\begin{definition}\label{def-mor}
Let $\A$ and $\B$ be Fell bundles over $G$. By a morphism $\A\to\B$ we mean a continuous map $\pi\colon \A\to \B$ that maps each fibre $A_t$ linearly into the fibre $B_t$ and which is compatible with multiplication and involution in the sense that
$$\pi(a\cdot b)=\pi(a)\cdot \pi(b)\quad\mbox{and}\quad\pi(a)^*=\pi(a)^*$$
for all $a,b\in \A$.
\end{definition}

A morphism $\pi\colon \A\to \B$ induces a map $\tilde\pi\colon \contc(\A)\to \contc(\B)$, $\xi\mapsto \tilde\pi(\xi)(t)\defeq \pi(\xi(t))$, which is clearly continuous with respect to the inductive limit topologies and hence extends to a \Star{}homomorphism
$\tilde\pi_u\colon C^*(\A)\to C^*(\B)$.
This shows that the construction $\B\mapsto C^*(\B)$ is a functor. The following result shows that this remains true for
the assignment $\B\mapsto C_\mu^*(\B)$ as in Definition \ref{def-functor}:

\begin{proposition}\label{prop-mor}
 Let $\rtimes_\mu$ be any crossed product functor. Then
 $\B\mapsto C_\mu^*(\B)$ is a functor from the
category of Fell bundles with morphisms as defined in Definition \ref{def-mor} in the sense that the
canonical map $\tilde\pi: C_c(\A)\to C_c(\B)$ induced from any morphism $\pi:\A\to \B$ extends to
a $*$-homomorphism $\tilde\pi:C_\mu^*(\A)\to C_\mu^*(\B)$.
\end{proposition}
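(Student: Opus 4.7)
The plan is to lift $\pi$ to the universal level, promote it to an equivariant map of double crossed products, use functoriality of $\rtimes_\mu$ to descend, and finally use the definition of $C^*_\mu$ (through Katayama duality) to recover a map between the exotic cross-sectional algebras.

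First I would check that the universal-level extension $\tilde\pi_u\colon C^*(\A)\to C^*(\B)$ intertwines the dual coactions $\delta_\A$ and $\delta_\B$. This should be a quick check on fibres from $\delta_\B(b_t)=\iota_\B(b_t)\otimes u_t$, since $\pi$ preserves the $G$-grading. Equivariance then yields a \Star{}homomorphism $\tilde\pi_u\rtimes\widehat G\colon C^*(\A)\rtimes_{\delta_\A}\widehat G\to C^*(\B)\rtimes_{\delta_\B}\widehat G$ which is $\widehat{\delta_\A}$-$\widehat{\delta_\B}$ equivariant for the dual actions. Applying the functor $\rtimes_\mu$ produces
\[
(\tilde\pi_u\rtimes\widehat G)\rtimes_\mu G\colon C^*(\A)\rtimes_{\delta_\A}\widehat G\rtimes_{\widehat{\delta_\A},\mu}G\to C^*(\B)\rtimes_{\delta_\B}\widehat G\rtimes_{\widehat{\delta_\B},\mu}G,
\]
fitting in a commutative square with the canonical quotient maps from the full double crossed products.

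By Definition~\ref{def-functor}, Katayama duality identifies the exotic double crossed products above with $C^*_\mu(\A)\otimes\K(L^2(G))$ and $C^*_\mu(\B)\otimes\K(L^2(G))$, and identifies the quotient maps with $q_\A\otimes\id$ and $q_\B\otimes\id$, where $q_\A$ and $q_\B$ denote the canonical surjections onto $C^*_\mu$. A short naturality check on $\Phi_B=((\id\otimes\lambda)\circ\delta\rtimes(1\otimes M))\rtimes(1\otimes\rho)$ will show that under Katayama the top map becomes $\tilde\pi_u\otimes\id_{\K}$, yielding a commutative square
\[
\begin{CD}
C^*(\A)\otimes\K(L^2(G)) @>\tilde\pi_u\otimes\id>> C^*(\B)\otimes\K(L^2(G))\\
@Vq_\A\otimes\id VV @VVq_\B\otimes\id V\\
C^*_\mu(\A)\otimes\K(L^2(G)) @>f>> C^*_\mu(\B)\otimes\K(L^2(G))
\end{CD}
\]
in which $f$ is the map from the previous paragraph.

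The last step is to extract $\tilde\pi_\mu$ from this square: for any $a\in\ker q_\A$ and any $k\in\K(L^2(G))$, commutativity forces $q_\B(\tilde\pi_u(a))\otimes k=f(0)=0$, so $\tilde\pi_u(\ker q_\A)\subseteq\ker q_\B$ and $\tilde\pi_u$ descends to the desired \Star{}homomorphism $\tilde\pi_\mu\colon C^*_\mu(\A)\to C^*_\mu(\B)$. By construction it restricts to $\tilde\pi$ on $\contc(\A)$, and functoriality $\widetilde{\sigma\circ\pi}_\mu=\tilde\sigma_\mu\circ\tilde\pi_\mu$ is inherited at each stage of the construction. The main obstacle is really the naturality of the Katayama duality map in the middle step, but this should boil down to a direct unwinding of the definition of $\Phi_B$.
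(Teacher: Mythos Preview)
Your proposal is correct and follows essentially the same route as the paper: both use the $\delta_\A$--$\delta_\B$ equivariance of $\tilde\pi_u$ to get a $G$-equivariant map of dual crossed products, apply functoriality of $\rtimes_\mu$, invoke naturality of the Katayama duality maps $\Phi_\A,\Phi_\B$ (which the paper also flags as following from the explicit formula $((\id\otimes\lambda)\circ\delta\rtimes(1\otimes M))\rtimes(1\otimes\rho)$), and then read off the factorisation of $\tilde\pi_u$ through the quotients $C^*_\mu(\A)$, $C^*_\mu(\B)$. Your kernel argument in the final paragraph just makes explicit what the paper packages into the phrase ``this is only possible if $\tilde\pi_u$ factors through a homomorphism $\tilde\pi_\mu$''.
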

\begin{proof} Consider the diagram
$$
\begin{CD}
C^*(\A)\rtimes_{\delta_\A} \widehat{G}\rtimes_{\widehat{\delta_\A}}G  @>\Phi_\A>\cong> C^*(\A)\otimes \K(L^2(G))\\
@V\tilde{\pi}_u\rtimes\widehat{G}\rtimes G VV    @VV \tilde\pi_u\otimes\id_\K V\\
C^*(\B)\rtimes_{\delta_\B} \widehat{G}\rtimes_{\widehat{\delta_\B}}G  @>\Phi_\B>\cong> C^*(\B)\otimes \K(L^2(G)).
\end{CD}
$$
It follows easily from the definition of  the dual coactions on $C^*(\A)$ and $C^*(\B)$, respectively,
that the morphism $\tilde\pi_u:C^*(\A)\to C^*(\B)$ is $\delta_\A-\delta_\B$-equivariant, which implies that the
left vertical arrow exists. Moreover, using the fact that $\Phi_\A$ is given by
the covariant homomorphism $\big((\id\otimes\lambda)\circ \delta_\A\rtimes(1\otimes M)\big)\rtimes (1\otimes \rho)$
(and similarly for $\Phi_\B$), the $\delta_\A-\delta_\B$-equivariance of $\tilde\pi_u$ also implies that the diagram
commutes. Now, since $\rtimes_\mu$ is a crossed-product functor, the vertical arrow on the left factors through a
$*$-homomorphism
$$\tilde\pi_u\rtimes\widehat{G}\rtimes_\mu G:C^*(\A)\rtimes_{\delta_\A} \widehat{G}\rtimes_{\widehat{\delta_\A},\mu}G
\to C^*(\B)\rtimes_{\delta_\B} \widehat{G}\rtimes_{\widehat{\delta_\B},\mu}G $$
and hence  the vertical arrow $\tilde\pi_u\otimes\id_\K$ on the right-hand side of the diagram
must also factor through a well-defined homomorphism
$(\tilde\pi_u\otimes \id_\K)_\mu: C_\mu^*(\A)\otimes \K(L^2(G))\to C_\mu^*(\B)\otimes \K(L^2(G))$.
But this is only possible if $\tilde\pi_u:C^*(\A)\to C^*(\B)$ factors through a homomorphism
$\tilde\pi_\mu:C^*_\mu(\A)\to C^*_\mu(\B)$, whence the result.
\end{proof}

The above proposition shows that given any crossed-product functor, the procedure given in Definition \ref{def-functor}
determines a functor on the category of Fell bundles. But does it always extend the given functor if we
apply the new functor to the semi-direct product Fell bundle $A\rtimes^\alpha G$ associated to a given action
$\alpha:G\to \Aut(A)$? Recall that the underlying topological space of $A\rtimes^\alpha G$ is the
trivial bundle $A\times G$ with multiplication and involution defined by
$$(a, t)(b, s)=(a\alpha_t(b), ts)\quad\text{and}\quad (a,t)^*=(\alpha_{t^{-1}}(a)^*, t^{-1})$$
for $(a,t), (b,s)\in A\times G$.
The notation $A\rtimes^\alpha G$ for this Fell bundle should not be mistaken with the
notation for the universal crossed product $A\rtimes_{\alpha}G$.
The following example shows that the answer to the above question is negative in general:

\begin{example}\label{ex-functor}
Let $G$ be any non-amenable group. We define a crossed product functor $(A,G,\alpha)\mapsto A\rtimes_{\alpha,\mu}G$
by letting $A\rtimes_{\alpha,\mu}G$ be the completion of the convolution algebra $C_c(G,A)$ with respect to the
\cstar{}norm
$$\|f\|_\mu=\sup\{\|\pi\rtimes U(f)\| :  U\prec \lambda \},$$
where $(\pi,U)$ runs through all covariant representations such that
$U$ {\em is weakly contained in} $\lambda$, which just means that the
kernel of $U$ in $C^*(G)$ contains the kernel of $\lambda$ in $C^*(G)$. The functor we get in this way is just the
Brown-Guentner functor associated to the reduced group algebra $C_r^*(G)$ as discussed in \cite{BG, Baum-Guentner-Willett:Expanders, Buss-Echterhoff-Willett:Exotic}.

We now consider the case of the trivial action of $G$ on $\C$. Then the $\mu$-crossed product $\C\rtimes_\mu G$
will just be the reduced group algebra $C_r^*(G)$ of $G$. On the other hand, the
 corresponding Fell bundle will just be the trivial bundle $\C\times G$ and the full cross-sectional algebra
will be the full group algebra $C^*(G)$. The crossed product $C^*(G)\rtimes_{\delta_G}\widehat{G}$
by the dual coaction is isomorphic to the algebra of compact operators $\K(L^2(G))$ with
faithful representation $\lambda\rtimes M: C^*(G)\rtimes_{\delta_G}\widehat{G}\to \BB(L^2(G))$
(e.g., see \cite[Example A.62]{Echterhoff-Kaliszewski-Quigg-Raeburn:Categorical}).
A straightforward computation shows that in this picture the dual action
$\widehat{\delta}_G: G\to \Aut(\K(L^2(G)))$ is given by $\widehat\delta_G(s)=\Ad\rho(s)$, where
$\rho:G\to U(L^2(G))$ is the right regular representation.
Hence, this action is implemented by a unitary representation and there is a
$*$-isomorphism
$$\Phi: \K(L^2(G))\rtimes_{\Ad\rho}G\stackrel{\cong}{\longrightarrow} \K(L^2(G))\rtimes_{\id} G\cong \K(L^2(G))\otimes C^*(G)$$
which sends an element $f$ in the dense subalgebra $C_c(G,\K(L^2(G)))$ to the function
$[s\mapsto f(s)\rho(s)]\in C_c(G,\K(L^2(G)))$. The representations of $\K(L^2(G))\otimes C^*(G)$ are all of the form
$\id_{\K}\otimes V$, where  $V:G\to U(\H)$ is a unitary representation of $G$ viewed
 as a representation of $C^*(G)$ via integration.
 The corresponding covariant representation of $(\K(L^2(G)), G,\id)$ is given by the pair
 $(\id_{\K}\otimes 1_\H, 1_{L^2(G)}\otimes V)$ and it is easy to check that the representation
 of $\K(L^2(G))\rtimes_{\Ad\rho}G$ corresponding to  $(\id_{\K}\otimes 1_\H, 1_{L^2(G)}\otimes V)$
 via the isomorphism $\Phi$ is given by the covariant pair $(\id_{\K}\otimes 1, \rho\otimes V)$.
 By Fell's trick we know that $\rho\otimes V$ is a multiple of $\rho$. Since  $\rho$ is unitarily equivalent
 to $\lambda$, we see that the unitary part of any  representation of $\K(L^2(G))\rtimes_{\Ad\rho}G$
 is weakly equivalent  to $\lambda$. But this implies that
 $$C^*(G)\rtimes_{\delta_G}\widehat{G}\rtimes_{\widehat{\delta_G},\mu}G\cong
 \K(L^2(G))\rtimes_{\Ad\rho, \mu}G=\K(L^2(G))\rtimes_{\Ad\rho}G
 \cong \K(L^2(G))\otimes C^*(G).$$
 As a consequence we see that
 $$C_\mu^*(\C\times G)\cong C^*(G)\not\cong C_r^*(G)=\C\rtimes_{\mu}G.$$
 Thus, in general, our procedure does not reproduce the given functor for actions when applied to semi-direct product bundles
 $A\rtimes^\alpha G$.
 \end{example}

The problem in the above example comes from the fact that the Brown-Guentner  crossed-product functor
associated to the reduced group algebra $C_r^*(G)$  is not {\em Morita compatible} in the sense
discussed in \cite{Buss-Echterhoff-Willett:Exotic}. Recall from \cite{Buss-Echterhoff-Willett:Exotic} that a crossed-product functor is called Morita compatible if it preserves Morita equivalences in the following sense:
If $(X,\gamma)$ is a $G$-equivariant equivalence bimodule  between two systems
$(A,G,\alpha)$ and $(B,G,\beta)$, and if we make $C_c(G,X)$ into a $C_c(G,A)-C_c(G,B)$ pre-equivalence
bimodule in the usual way, then there is a completion $X\rtimes_{\gamma,\mu}G$ of $C_c(G,X)$
which becomes a $A\rtimes_{\alpha,\mu}G-B\rtimes_{\beta,\mu}G$ equivalence bimodule.
Note that the results of \cite{Buss-Echterhoff-Willett:Exotic} show that Morita compatibility -- or the even stronger assumption that
$\rtimes_\mu$ is a correspondence functor -- are quite reasonable to assume for a ``good behaved'' crossed-product functor.
Recall also from the discussions in \cite{Buss-Echterhoff-Willett:Exotic} that for many non-amenable groups there exist uncountably many
different correspondence (and hence Morita compatible) functors for $G$.

\begin{theorem}\label{thm-Morita}
Assume that $(A,G,\alpha)\mapsto A\rtimes_{\alpha,\mu}G$ is a Morita compatible crossed-product functor for $G$.
Then there is a canonical isomorphism $C_\mu^*(A\rtimes^\alpha G)\cong A\rtimes_{\alpha,\mu}G$ for any system
$(A,G,\alpha)$ given on the dense subalgebra $C_c(A\rtimes^\alpha G)$ by the
canonical identification $C_c(A\rtimes^\alpha G)=C_c(G,A)\subseteq A\rtimes_{\alpha,\mu}G$.
Thus, for Morita compatible crossed-product functors, the procedure of Definition \ref{def-functor}
defines an extension of the functor $\rtimes_\mu$ to the category of Fell bundles.
\end{theorem}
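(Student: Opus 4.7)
Set $\B \defeq A\rtimes^\alpha G$. The standard identification $C^*(\B) = A\rtimes_\alpha G$ sends the dual coaction $\delta_\B$ to the usual dual coaction $\widehat{\alpha}$ (both are integrated forms of $(a,t)\mapsto (a,t)\otimes u_t$). I will identify $C^*_\mu(\B)$ with $A\rtimes_{\alpha,\mu}G$ by computing both sides of the defining equation of $C^*_\mu(\B)$ using classical crossed-product duality.

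The plan is to chase the diagram through the following three steps. First, by classical Imai--Takai duality,
\[
C^*(\B)\rtimes_{\delta_\B}\widehat{G} \;\cong\; (A\rtimes_\alpha G)\rtimes_{\widehat{\alpha}}\widehat{G} \;\cong\; A\otimes \K(L^2(G)),
\]
and under this isomorphism the double dual action $\widehat{\widehat{\alpha\,}}$ corresponds to $\alpha\otimes\Ad\rho$, where $\rho\colon G\to U(L^2(G))$ is the right regular representation. Second, the systems $(A,\alpha)$ and $\bigl(A\otimes\K(L^2(G)),\alpha\otimes\Ad\rho\bigr)$ are $G$-equivariantly Morita equivalent via the natural equivariant imprimitivity bimodule $A\otimes L^2(G)$. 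Since $\rtimes_\mu$ is Morita compatible, applying it to this equivalence (via the equivariant linking-algebra trick, which turns the Morita equivalence into the desired isomorphism of corners) yields a canonical identification
\[
\bigl(A\otimes\K(L^2(G))\bigr)\rtimes_{\alpha\otimes\Ad\rho,\mu} G \;\cong\; \bigl(A\rtimes_{\alpha,\mu}G\bigr)\otimes \K(L^2(G)).
\]
Third, combining these two isomorphisms with Definition~\ref{def-functor},
\[
C^*_\mu(\B)\otimes \K(L^2(G)) \;\cong\; C^*(\B)\rtimes_{\delta_\B}\widehat{G}\rtimes_{\widehat{\delta_\B},\mu}G \;\cong\; \bigl(A\rtimes_{\alpha,\mu}G\bigr)\otimes \K(L^2(G)).
\]

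To descend from the stabilised picture to $C^*_\mu(\B) \cong A\rtimes_{\alpha,\mu}G$, I will argue that both algebras are quotients of $C^*(\B) = A\rtimes_\alpha G$ with the same kernel. Indeed, chasing naturality of the Imai--Takai and Morita-compatibility isomorphisms on dense subalgebras shows that the composite isomorphism in the previous display restricts, on $C_c(\B) = C_c(G,A)$, to the canonical map induced by the full-to-$\mu$ quotient $A\rtimes_\alpha G \onto A\rtimes_{\alpha,\mu}G$ tensored with a rank-one element. Since tensoring by $\K(L^2(G))$ is faithful and preserves the ideal lattice, the kernels of the two quotient maps $C^*(\B)\onto C^*_\mu(\B)$ and $C^*(\B)\onto A\rtimes_{\alpha,\mu}G$ coincide, so the induced map between the quotients is the desired isomorphism, and by construction it is given on $C_c(\B) = C_c(G,A)$ by the canonical inclusion into $A\rtimes_{\alpha,\mu}G$.

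The hard part is Step~2: carefully verifying that Morita compatibility of $\rtimes_\mu$, which a priori only asserts existence of an equivalence bimodule at the $\mu$-crossed-product level, actually delivers the concrete isomorphism $(A\otimes\K(L^2(G)))\rtimes_{\alpha\otimes\Ad\rho,\mu}G \cong (A\rtimes_{\alpha,\mu}G)\otimes\K(L^2(G))$, and moreover in a way compatible with the dense-subalgebra identifications used to define $C^*_\mu(\B)$ and to recognise elements of $C_c(G,A)$ inside $A\rtimes_{\alpha,\mu}G$. This is handled by applying $\rtimes_\mu$ to the equivariant linking algebra of $A\otimes L^2(G)$ (which is isomorphic to $A\otimes L(L^2(G))$ with the evidently induced $G$-action) and reading off the diagonal corners. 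The remaining identifications in Steps~1 and~3 are bookkeeping with well-known naturality statements for the dual coaction and Katayama's duality.
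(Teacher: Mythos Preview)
Your proposal is correct and follows essentially the same route as the paper: Imai--Takai duality identifies $C^*(\B)\rtimes_{\delta_\B}\widehat{G}$ with $A\otimes\K(L^2(G))$ carrying $\alpha\otimes\Ad\rho$, then Morita compatibility converts the equivalence between $\alpha\otimes\Ad\rho$ and $\alpha$ into the stabilised isomorphism, and one descends via the quotient diagram. The only cosmetic difference is that where you sketch the linking-algebra argument for Step~2, the paper simply invokes \cite{Buss-Echterhoff-Willett:Exotic}*{Corollary~5.4}, which packages exactly that step (the concrete isomorphism $(A\otimes\K)\rtimes_{\alpha\otimes\Ad\rho,\mu}G\cong (A\rtimes_{\alpha,\mu}G)\otimes\K$ via the covariant pair $(i_A\otimes\id_\K,\,i_G\otimes\rho)$) and its compatibility with the universal-to-$\mu$ quotient; the paper then records the commutative square rather than arguing with ideal lattices.
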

\begin{proof} The result is a consequence of the Imai-Takai duality theorem for actions: Assume that $\alpha:G\to\Aut(A)$
is an action. Then the full crossed product $A\rtimes_\alpha G$ coincides with the full cross-sectional algebra
$C^*(A\rtimes^\alpha G)$ since both are universal for covariant representations. Moreover, the dual coactions of
$G$ on $A\rtimes_{\alpha}G$ and $C^*(A\rtimes^\alpha G)$ coincide. Hence the Imai-Takai duality theorem shows that
$$C^*(A\rtimes^\alpha G)\rtimes_{\widehat{\alpha}}\widehat{G}=A\rtimes_{\alpha}G\rtimes_{\widehat{\alpha}}\widehat{G}\cong A\otimes \K(L^2(G)).$$
which is equivariant for the bi-dual action $\widehat{\widehat{\alpha\,}}$ on the left and the action
$\alpha\otimes \Ad\rho$ on the right (e.g., see \cite{IT} or
\cite[Theorem A.67]{Echterhoff-Kaliszewski-Quigg-Raeburn:Categorical}). As already observed in the previous example, the action $\alpha\otimes \Ad\rho$ is
Morita equivalent to $\alpha\otimes \id$ with respect to the equivariant  bimodule $(A\otimes \K(L^2(G)), \alpha\otimes \rho)$.
By Corollary~5.4 in \cite{Buss-Echterhoff-Willett:Exotic},  it follows that the integrated form $\Psi_\rho$ of the covariant homomorphism
$(i_A\otimes \id_\K, i_G\otimes \rho)$ factors through an isomorphism
$$\Psi_\mu :\big(A\otimes \K(L^2(G))\big)\rtimes_{\alpha\otimes\Ad\rho,\mu}G\stackrel{\cong}{\longrightarrow}
(A\rtimes_{\alpha,\mu}G)\otimes \K(L^2(G)),$$
where $(i_A^\mu,i_G^\mu)$ denotes the canonical representation of $(A,G,\alpha)$ into $\M(A\rtimes_{\alpha,\mu} G)$.
Combined, we obtain an isomorphism
$$C^*(A\rtimes^\alpha G)\rtimes_{\widehat{\alpha}}\widehat{G}\rtimes_{\widehat{\widehat{\alpha\,}},\mu}G\cong  (A\rtimes_{\alpha,\mu}G)\otimes \K(L^2(G))$$
which fits into a commutative diagram
$$
\begin{CD}
C^*(A\rtimes^\alpha G)\rtimes_{\widehat{\alpha}}\widehat{G}\rtimes_{\widehat{\widehat{\alpha\,}}}G
@>\Psi_{u}>>  (A\rtimes_{\alpha}G)\otimes \K(L^2(G))\\
@VVV   @VVV\\
C^*(A\rtimes^\alpha G)\rtimes_{\widehat{\alpha}}\widehat{G}\rtimes_{\widehat{\widehat{\alpha\,}},\mu}G
@>>\Psi_\mu> (A\rtimes_{\alpha,\mu}G)\otimes \K(L^2(G))
\end{CD}
$$
where both vertical arrows are induced by the natural inclusion $C_c(A\rtimes^\alpha G)=C_c(G,A)\subseteq A\rtimes_{\alpha,\mu}G$.
This finishes the proof.
\end{proof}

In particular the above result applies to \emph{correspondence} crossed-product functors as introduced in \cite{Buss-Echterhoff-Willett:Exotic}.
This is a class of crossed-product functors which extend (in a suitable way) to the category of $G$-algebras with equivariant correspondences as their morphisms. The equivariant Morita equivalences are the isomorphisms in this category, so it is no surprise that these functors are Morita compatible. In \cite{Buss-Echterhoff-Willett:Exotic}*{Theorem~4.9} a list of equivalent conditions is given in order to check whether a crossed-product functor is a correspondence functor.
It is shown in \cite{Buss-Echterhoff-Willett:Exotic} that correspondence functors have very nice properties. For instance, they behave very well with respect to $K$-theory, and we will explore this point in the next section. Here we want to use the fact, proven in \cite{Buss-Echterhoff-Willett:Exotic}*{Theorem~5.6}, that the correspondence functors always admit dual coactions for ordinary crossed products and deduce the following consequence:

\begin{corollary}
Let $\rtimes_\mu$ be a correspondence crossed-product functor for $G$ and let $\B$ be a Fell bundle over $G$.
Then the dual coaction $\delta_\B$ on $C^*(\B)$ factors through a coaction $\delta_{\B,\mu}\colon C^*_\mu(\B)\to \M(C^*_\mu(\B)\otimes C^*(G))$, which we call the \emph{dual $\mu$-coaction}. The quotient maps $C^*(\B)\onto C^*_\mu(\B)\onto C^*_r(\B)$ induce isomorphisms
\begin{equation}\label{eq:all-crossed-product-isomorphic}
C^*(\B)\rtimes_{\delta_\B}\dualG\congto C^*_\mu(\B)\rtimes_{\delta_{\B,\mu}}\dualG\congto C^*_\red(\B)\rtimes_{\delta_\B^r}\dualG.
\end{equation}
Hence the dual $\mu$-coaction satisfies \emph{$\mu$-duality} in the sense that Katayama's map is an isomorphism
\begin{equation}\label{eq:mu-coaction-satisfy-mu-duality}
C^*_\mu(\B)\rtimes_{\delta_{\B,\mu}}\dualG\rtimes_{\dual\delta_{\B,\mu}}G\congto C^*_\mu(\B)\otimes \K(L^2(G)).
\end{equation}
This isomorphism sends the bidual coaction $\dual{\dual\delta}_{\B,\mu}$ to the coaction $\Ad_{W}\circ (\delta_{\B,\mu}\otimes_*\id)$,
where $W=1\otimes w_G^*$, $w_G\in \M(\contz(G)\otimes C^*(G))$ is the fundamental unitary (which can be seen as the universal representation $t\mapsto u_t$ of $G$), and $\delta_{\B,\mu}\otimes_*\id$ denotes the obvious coaction $C^*_{{\mu}}(\B)\otimes\K(L^2(G))\to \M(C^*_{{\mu}}(\B)\otimes \K(L^2(G))\otimes C^*(G))$.
\end{corollary}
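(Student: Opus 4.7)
The plan is to descend the dual coaction $\delta_\B$ along the quotient $C^*(\B)\onto C^*_\mu(\B)$ to obtain $\delta_{\B,\mu}$, and then to read off \eqref{eq:all-crossed-product-isomorphic}, \eqref{eq:mu-coaction-satisfy-mu-duality} and the formula for the bidual coaction from the combination of Theorem~\ref{theo:maximal-dual-coaction}, Remark~\ref{rem:regular=normalisation} and the definition of $C^*_\mu(\B)$.

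Set $A\defeq C^*(\B)\rtimes_{\delta_\B}\dualG$. Because $\rtimes_\mu$ is a correspondence functor, \cite{Buss-Echterhoff-Willett:Exotic}*{Theorem~5.6} produces a canonical bidual coaction on $A\rtimes_{\widehat{\delta_\B},\mu}G$ for which the quotient $A\rtimes_{\widehat{\delta_\B}}G\onto A\rtimes_{\widehat{\delta_\B},\mu}G$ is equivariant. Combined with Theorem~\ref{theo:maximal-dual-coaction} (which makes $\Phi_\B$ an isomorphism intertwining the bidual coaction on the left with $\Ad_W\circ(\delta_\B\otimes_*\id)$ on the right) and with the defining isomorphism of $C^*_\mu(\B)$, this yields a commutative diagram
\begin{equation*}
\begin{CD}
A\rtimes_{\widehat{\delta_\B}}G @>\Phi_\B>\cong> C^*(\B)\otimes \K(L^2(G)) \\
@VVV @VVV \\
A\rtimes_{\widehat{\delta_\B},\mu}G @>\cong>> C^*_\mu(\B)\otimes \K(L^2(G))
\end{CD}
\end{equation*}
whose vertical arrows are automatically equivariant for the respective bidual coactions. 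The kernel of the right vertical arrow has the form $\I_0\otimes\K(L^2(G))$ with $\I_0\defeq\ker(C^*(\B)\onto C^*_\mu(\B))$, and stability of coactions forces $\I_0$ to be $\delta_\B$-invariant. Hence $\delta_\B$ descends to a coaction $\delta_{\B,\mu}$ on $C^*_\mu(\B)$, and the diagram identifies the induced coaction on $C^*_\mu(\B)\otimes\K(L^2(G))$ with $\Ad_W\circ(\delta_{\B,\mu}\otimes_*\id)$, which is the last assertion of the corollary.

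For \eqref{eq:all-crossed-product-isomorphic}, Remark~\ref{rem:regular=normalisation} identifies $\Lambda_\B\colon C^*(\B)\onto C^*_\red(\B)$ with the normalisation of $\delta_\B$, so it induces an isomorphism of coaction crossed products by the standard maximalisation--normalisation theory. Since $\Lambda_\B$ factors through the equivariant surjection $C^*_\mu(\B)\onto C^*_\red(\B)$ (obtained by applying the diagram above with $\mu$ replaced by $\red$), both intermediate maps in \eqref{eq:all-crossed-product-isomorphic} are surjective with isomorphic composition, hence each is itself an isomorphism. Substituting $C^*(\B)\rtimes_{\delta_\B}\dualG\cong C^*_\mu(\B)\rtimes_{\delta_{\B,\mu}}\dualG$ into the defining isomorphism of $C^*_\mu(\B)$ immediately yields \eqref{eq:mu-coaction-satisfy-mu-duality}, and tracing covariant generators through both isomorphisms identifies the composite with Katayama's map for $(C^*_\mu(\B),\delta_{\B,\mu})$.

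The main obstacle is the descent step -- showing that $\I_0$ is a $\delta_\B$-invariant ideal -- and it is precisely here that the correspondence-functor hypothesis enters: without the bidual coaction on $A\rtimes_{\widehat{\delta_\B},\mu}G$ furnished by \cite{Buss-Echterhoff-Willett:Exotic}*{Theorem~5.6}, there is no direct way to transport $\widehat{G}$-invariance from the $\mu$-crossed product back across Katayama duality to $C^*(\B)$. Once $\delta_{\B,\mu}$ is in hand, the remaining statements reduce to standard normalisation theory and diagram chases.
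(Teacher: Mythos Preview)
Your proposal is correct and follows essentially the same approach as the paper's proof: both use Theorem~\ref{theo:maximal-dual-coaction} together with \cite{Buss-Echterhoff-Willett:Exotic}*{Theorem~5.6} to push the bidual coaction through the Katayama isomorphism and thereby descend $\delta_\B$ to $C^*_\mu(\B)$, then invoke Remark~\ref{rem:regular=normalisation} and the factorisation $C^*(\B)\onto C^*_\mu(\B)\onto C^*_r(\B)$ to obtain \eqref{eq:all-crossed-product-isomorphic}, and finally reinterpret the defining isomorphism of $C^*_\mu(\B)$ as \eqref{eq:mu-coaction-satisfy-mu-duality}. The only cosmetic difference is that you phrase the descent in terms of $\delta_\B$-invariance of the ideal $\I_0$ (implicitly using that $\Ad_W$ is inner and hence preserves invariant ideals), whereas the paper says directly that $\Ad_W\circ(\delta_\B\otimes_*\id)$ factors through a coaction of the form $\Ad_W\circ(\delta_{\B,\mu}\otimes_*\id)$; these are the same observation.
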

\begin{proof}
By Theorem~\ref{theo:maximal-dual-coaction}, Katayama's homomorphism is an isomorphism
\begin{equation}\label{Katayama-Isomorphism}
C^*(\B)\rtimes_{\delta_{\B}}\dualG\rtimes_{\dual\delta_{\B}}G\congto C^*(\B)\otimes \K(L^2(G)).
\end{equation}
It is well known (see e.g. \cite{Echterhoff-Kaliszewski-Quigg:Maximal_Coactions}) that the bidual coaction $\dual{\dual\delta}_\B$ on the left-hand side corresponds to the coaction $\Ad_{W}\circ (\delta_{\B}\otimes_*\id)$ as in the statement.
By definition, $C^*_\mu(\B)$ is the quotient of $C^*(\B)$ that turns~\eqref{Katayama-Isomorphism} into an isomorphism
\begin{equation}\label{Katayama-Isomorphism-mu}
C^*(\B)\rtimes_{\delta_{\B}}\dualG\rtimes_{\dual\delta_{\B,\mu}}G\congto C^*_\mu(\B)\otimes \K(L^2(G)).
\end{equation}
Since $\rtimes_\mu$ is a correspondence functor, the left-hand side carries a (bi)dual coaction $\dual{\dual\delta}_{\B,\mu}$ (by \cite{Buss-Echterhoff-Willett:Exotic}*{Theorem~5.6}). More precisely, the bidual coaction on the full crossed product $C^*(\B)\rtimes_{\delta_{\B}}\dualG\rtimes_{\dual\delta_{\B}}G$ factors through the coaction $\dual{\dual\delta}_{\B,\mu}$. It follows that the coaction $\Ad_{W}\circ (\delta_{\B}\otimes_*\id)$ also factors through a coaction on $C^*_\mu(\B)\otimes \K(L^2(G))$ of the form $\Ad_{W}\circ (\delta_{\B,\mu}\otimes_*\id)$, where $\delta_{\B,\mu}$ is a coaction of $C^*_\mu(\B)$ which factors the dual coaction $\delta_\B$ on $C^*(\B)$. This holds in particular for the reduced cross-sectional algebra $C^*_r(\B)$ and, as already observed in Remark~\ref{rem:regular=normalisation}, in this case the coaction $\delta_{\B,r}$ is a normalisation of $\delta_\B$.
In particular the quotient homomorphism $C^*(\B)\onto C^*_r(\B)$ (which is the regular representation of $\B$) induces an isomorphism $C^*(\B)\rtimes_{\delta_\B}\dualG\congto C^*_r(\B)\rtimes_{\delta_{\B,r}}\dualG$.
It follows that the same is also true for every other exotic quotient $C^*_\mu(\B)$ because the quotient map $C^*(\B)\onto C^*_r(\B)$ (and hence also the induced map on crossed products) factors as a composition $C^*(\B)\onto C^*_\mu(\B)\onto C^*_r(\B)$. This implies the isomorphism~\eqref{eq:all-crossed-product-isomorphic} and the isomorphism~\eqref{eq:mu-coaction-satisfy-mu-duality} is then just a re-interpretation of the defining isomorphism~\eqref{Katayama-Isomorphism-mu}.
\end{proof}

\subsection{$K$-amenability}

The concept of $K$-amenable groups has first been introduced for discrete groups
by Cuntz in
\cite{Cuntz:K-amenable}
and has then been extended to locally compact groups
by Julg and Valette in \cite{JV}. It follows from the results of Cuntz that a
 discrete group $G$ is $K$-amenable if and only if the regular representation
  $\lambda: C^*(G)\to C_r^*(G)$ is a $KK$\nb-equivalence, which then implies that for
  all actions $\alpha:G\to\Aut(A)$ the regular representation $\Lambda_A^G: A\rtimes_{\alpha}G\to A\rtimes_{\alpha,r}G$
  is a $KK$\nb-equivalence as well.
   The definition of $K$-amenability for general  locally compact groups
  is slightly more technical, but as a consequence we also get that  regular representations of
  crossed-products induce $KK$\nb-equivalences between the full and reduced crossed products.
  More generally, it is shown in \cite[Theorem 5.6]{Buss-Echterhoff-Willett:Exotic} that, if
  $G$ is $K$-amenable, then
  for any correspondence functor $\rtimes_\mu$, the canonical quotient maps
  $$A\rtimes_{\alpha}G\onto A\rtimes_{\alpha,\mu}G\onto A\rtimes_{\alpha,r}G$$
  are $KK$\nb-equivalences.
  Cuntz has shown in \cite{Cuntz:K-amenable} that all free groups are $K$-amenable and that
  $K$-amenability enjoys some nice permanence properties. Moreover, a more recent result of Tu
  \cite{Tu} shows  that all a-$T$-menable groups are $K$-amenable as well.

The results of the previous section now allow us to extend  \cite[Theorem 5.6]{Buss-Echterhoff-Willett:Exotic} to
cross-sectional algebras of Fell bundles. For general Fell bundles, the result seems to be new even
for the quotient map $\Lambda_\B: C^*(\B)\onto C_r^*(\B)$, but this special case is known for Fell bundles associated to partial actions of discrete groups (see \cite{McClanahan:K-theory-partial} and Section~\ref{Sec:Partial-Actions} below for further discussion).

\begin{corollary}\label{cor:K-amenable->KK-equivalent}
Let $G$ be a $K$-amenable locally compact group and let $\rtimes_\mu$ be a correspondence crossed-product functor for $G$.
Then both $*$-homomorphisms in the  sequence
$$C^*(\B)\onto C_\mu^*(\B)\onto C_r^*(\B)$$
given by the identity maps on $C_c(\B)$ are $KK$\nb-equivalences. In particular, they induce
isomorphisms $K_*(C^*(\B))\cong K_*(C_\mu^*(\B))\cong K_*(C_r^*(\B))$.
\end{corollary}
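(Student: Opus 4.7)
The plan is to reduce the statement to the corresponding fact for ordinary crossed products via Katayama duality for the (maximal/exotic/normal) dual coactions on $C^*(\B)$, $C_\mu^*(\B)$ and $C_r^*(\B)$, which we have assembled in the previous corollary, and then invoke \cite{Buss-Echterhoff-Willett:Exotic}*{Theorem~5.6}.

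First I would set $A\defeq C^*(\B)\rtimes_{\delta_\B}\widehat{G}$. By the isomorphisms~\eqref{eq:all-crossed-product-isomorphic}, the quotient maps $C^*(\B)\onto C_\mu^*(\B)\onto C_r^*(\B)$ induce isomorphisms at the level of the $\widehat{G}$-crossed products, so the three coaction-crossed products can all be identified with the single \cstar{}algebra $A$; moreover, under these identifications the dual actions $\widehat{\delta_\B}$, $\widehat{\delta_{\B,\mu}}$, $\widehat{\delta_{\B,r}}$ are intertwined, giving a single dual action $\widehat{\delta_\B}\colon G\to\Aut(A)$. Applying Katayama duality to each of the three coactions then yields isomorphisms
\begin{align*}
A\rtimes_{\widehat{\delta_\B}}G &\cong C^*(\B)\otimes\K(L^2(G)),\\
A\rtimes_{\widehat{\delta_\B},\mu}G &\cong C_\mu^*(\B)\otimes\K(L^2(G)),\\
A\rtimes_{\widehat{\delta_\B},r}G &\cong C_r^*(\B)\otimes\K(L^2(G)),
\end{align*}
where the first comes from Theorem~\ref{theo:maximal-dual-coaction}, the second from Definition~\ref{def-functor} (using that $\rtimes_\mu$ is a correspondence functor, hence Morita compatible), and the third from the fact that $C_r^*(\B)$ realises the normalisation of $\delta_\B$ (Remark~\ref{rem:regular=normalisation}).

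Next I would check, by naturality of the Katayama maps on the common dense subalgebra, that under these identifications the canonical quotient maps
$$A\rtimes_{\widehat{\delta_\B}}G\onto A\rtimes_{\widehat{\delta_\B},\mu}G\onto A\rtimes_{\widehat{\delta_\B},r}G$$
correspond exactly to the maps $C^*(\B)\otimes\K\onto C_\mu^*(\B)\otimes\K\onto C_r^*(\B)\otimes\K$ induced by the quotient homomorphisms of the statement tensored with $\id_\K$. Since $G$ is $K$-amenable and $\rtimes_\mu$ is a correspondence functor, \cite{Buss-Echterhoff-Willett:Exotic}*{Theorem~5.6} applied to the $G$\nobreakdash-algebra $(A,\widehat{\delta_\B})$ says that both quotient maps in the upper line are $KK$\nobreakdash-equivalences. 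Stability of $KK$ under tensoring with $\K(L^2(G))$ then lets us cancel the $\otimes\K$ factor and conclude that the maps $C^*(\B)\onto C_\mu^*(\B)\onto C_r^*(\B)$ are themselves $KK$\nobreakdash-equivalences, which in particular implies the asserted isomorphisms in $K$\nobreakdash-theory.

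The step I expect to require the most care is the matching of the quotient maps at the crossed-product level with the quotient maps on the exotic cross-sectional algebras. This requires tracing through the concrete formulas for Katayama's map $\Phi_B$ given before Definition~\ref{def-functor} and using that all constructions agree on the common dense subalgebra generated by $C_c(\B)$ and $C_c(G)$; once this naturality check is done, the rest is an immediate application of the $K$\nobreakdash-amenability theorem of \cite{Buss-Echterhoff-Willett:Exotic}.
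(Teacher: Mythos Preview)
Your proposal is correct and follows essentially the same route as the paper: both arguments set $A=C^*(\B)\rtimes_{\delta_\B}\widehat{G}$, use the Katayama isomorphisms to identify $A\rtimes_{\widehat{\delta_\B}}G$, $A\rtimes_{\widehat{\delta_\B},\mu}G$, $A\rtimes_{\widehat{\delta_\B},r}G$ with the three stabilised cross-sectional algebras, invoke $K$\nobreakdash-amenability (via \cite{Buss-Echterhoff-Willett:Exotic}) for the left column, and cancel the $\K(L^2(G))$\nobreakdash-factor. The only cosmetic difference is that the paper packages this as a single commutative diagram and appeals directly to Definition~\ref{def-functor} for the middle isomorphism, whereas you pass through the previous corollary and spell out the naturality check; neither adds nor omits any substantive idea.
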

\begin{proof}
Consider the following commutative diagram
$$
\begin{CD}
C^*(\B)\rtimes_{\delta_\B}\widehat{G}\rtimes_{\widehat{\delta_G}}G   @>\Psi_\B >\cong >  C^*(\B)\otimes \K(L^2(G))\\
@VVV   @VVV\\
C^*(\B)\rtimes_{\delta_\B}\widehat{G}\rtimes_{\widehat{\delta_G},\mu}G   @>\Psi_\mu >\cong >  C_\mu^*(\B)\otimes \K(L^2(G))\\
@VVV   @VVV\\
C^*(\B)\rtimes_{\delta_\B}\widehat{G}\rtimes_{\widehat{\delta_G},r}G   @>\Psi_r >\cong >  C_r^*(\B)\otimes \K(L^2(G)).
\end{CD}
$$
Since $G$ is $K$-amenable, the vertical arrows on the left hand side are $KK$\nb-equivalences. Hence the right
vertical arrows are $KK$\nb-equivalences as well. Since being $KK$\nb-equi\-va\-lence is stable under stabilisation
by compact operators, the result follows.
\end{proof}

\begin{remark} The above result can be generalised as follows. Let $\rtimes_{\mu}$ be any crossed-product functor. Following
\cite{Buss-Echterhoff:Exotic_GFPA} we say that a given coaction $\delta:B\to \M(B\otimes C^*(G))$
is a $\mu$-coaction if Katayama's duality surjection
$B\rtimes_\delta \widehat{G}\rtimes_{\widehat{\delta}}G\onto B\otimes \K(L^2(G))$
factors through an isomorphism
$B\rtimes_\delta \widehat{G}\rtimes_{\widehat{\delta},\mu}G\cong B\otimes \K(L^2(G))$.
In particular, if $\rtimes_\mu$ is a correspondence functor and $\B$ is a Fell bundle over $G$, then
the dual coaction $\delta_{\B,\mu}$ on $C_\mu^*(\B)$ is a $\mu$-coaction.

Suppose now that $G$ is $K$-amenable, $\rtimes_\mu$ is a correspondence functor for $G$ and $(B,\delta)$ is a $\mu$-coaction. Then, if
$(B_m,\delta_m)$ and $(B_n,\delta_n)$ are the maximalisation and normalisation
of $(B,\delta)$, respectively, the corresponding quotient maps
$$B_m\stackrel{q_m}{\onto} B\stackrel{q_n}{\onto}B_n$$
are $KK$\nb-equivalences. This follows directly from the commutative
diagram
$$
\begin{CD}
B_m\rtimes_{\delta_m} \widehat{G}\rtimes_{\widehat{\delta},\mu}G @>\cong >> B_m\otimes \K(L^2(G))\\
@Vq_m\rtimes\widehat{G}\rtimes GVV   @VVq_m\otimes \id_\K V\\
B\rtimes_\delta \widehat{G}\rtimes_{\widehat{\delta},\mu}G @>\cong >>  B\otimes \K(L^2(G))\\
@Vq_n\rtimes\widehat{G}\rtimes GVV   @VVq_n\otimes \id_\K V\\
B_n\rtimes_{\delta_n} \widehat{G}\rtimes_{\widehat{\delta},\mu}G @>\cong >>  B_n\otimes \K(L^2(G))
\end{CD}
$$
and the fact that both morphisms in the sequence
$$B_m\rtimes_{\delta_m} \widehat{G}\stackrel{q_m\rtimes \widehat{G}}{\longrightarrow}
B\rtimes_{\delta} \widehat{G}\stackrel{q_n\rtimes \widehat{G}}{\longrightarrow}
B_n\rtimes_{\delta_n} \widehat{G}$$
are $G$-equivariant isomorphisms. In particular, if $G$ is $K$-amenable, all \cstar{}algebras $B_m$, $B$ and $B_n$ have same $K$-theory and $K$-homology groups.
\end{remark}

\subsection{Partial actions}\label{Sec:Partial-Actions}
The notion of partial actions of the group of integers has been introduced by Exel in \cite{Exel:Circle} and subsequently generalized to arbitrary discrete groups by McClanahan in \cite{McClanahan:K-theory-partial}. In \cite{Exel:TwistedPartialActions} Exel generalizes both notions and defines twisted partial actions of locally compact groups. Every twisted partial action gives rise to a Fell bundle via a construction similar to the semidirect Fell bundle associated to an ordinary (global, untwisted) action. Moreover, the main result in \cite{Exel:TwistedPartialActions} shows that, after stabilisation, every Fell bundle is isomorphic to one of this form, that is, a Fell bundle associated to a twisted partial action (and for discrete groups or saturated Fell bundles, the twist can be removed; see \cite{Exel:Book, Sehnem:MasterThesis, Packer-Raeburn:Stabilisation, Echterhoff:Morita_twisted}). In this section, we will focus only on partial actions, but essentially all results go through with essentially no change (except that the notation becomes slightly more complicated) for general twisted partial actions.

Let $\alpha$ be a partial action of a locally compact group $G$ on a \cstar{}algebra $A$. This consists of partial automorphisms $\alpha_t\colon D_{t^{-1}}\to D_t$ between certain (closed, two-sided) ideals $D_t\sbe A$ with $D_e=A$, $\alpha_e=\id_A$ and such that $\alpha_{st}$ extends $\alpha_s\circ\alpha_t$ for all $s,t\in G$. An appropriate continuity condition for the family of maps $\alpha_t$ is also required to hold. We refer the reader to \cite{Exel:TwistedPartialActions} for details. Given such a partial action, the associated Fell bundle is the bundle $A\rtimes^\alpha G\defeq \{(a,t)\in A\times G: a\in D_t\}$ with algebraic operations defined by
$$(a,t)\cdot (b,s)=(\alpha_t(\alpha_{t^{-1}}(a)b),ts)\quad\mbox{and}\quad (a,t)^*=(\alpha_{t^{-1}}(a)^*,t^{-1}).$$
The full (resp. reduced) crossed product of $(A,\alpha)$ can defined as the full (resp. reduced) cross-sectional \cstar{}algebras of $A\rtimes^\alpha G$. More generally, we can introduce:

\begin{definition}
Given a partial action $(A,\alpha)$ of $G$ and a Morita compatible $G$\nb-crossed-product functor $\rtimes_\mu$, we define the $\mu$-crossed product $A\rtimes_{\alpha,\mu}G$ as the $\mu$-cross-sectional \cstar{}algebra $C^*_\mu(A\rtimes^\alpha G)$ (as in Definition~\ref{def-functor}).
\end{definition}

Notice that by Theorem~\ref{thm-Morita} (and our assumption of Morita compatibility), the above definition recovers the original $\mu$-crossed product for global actions. Also, Proposition~\ref{prop-mor} allows us to extend the original crossed-product functor to a functor $(A,\alpha)\mapsto A\rtimes_{\alpha,\mu}G$ from the category of partial $G$-actions to the category of \cstar{}algebras, where a morphism between two partial $G$-actions $(A,\alpha)$ and $(B,\beta)$ is defined as $G$-equivariant \Star{}homomorphism $\pi\colon A\to B$, meaning that $\pi(D_t^\alpha)\sbe D_t^\beta$ and $\beta_t(\pi(a))=\pi(\alpha_t(a))$ for all $t\in G$ and $a\in D_{t^{-1}}^\alpha$. These are exactly the morphisms between the associated semidirect Fell bundles $A\rtimes^\alpha G$ and $B\rtimes^\beta G$.

Given a partial action $(A,\alpha)$, we view the dual coaction on $C^*(A\rtimes^\alpha G)$ as \emph{the dual coaction} of $A\rtimes_\alpha G$ and denote it by $\dual\alpha$. Our main result (Theorem~\ref{theo:maximal-dual-coaction}) says that we have a natural isomorphism
\begin{equation}\label{eq:double-dual-partial}
(A\rtimes_\alpha G)\rtimes_{\dual\alpha}\dualG\rtimes_{\dual{\dual\alpha}}G\cong (A\rtimes_\alpha G)\otimes\K(L^2(G)).
\end{equation}
In particular this implies that, after stabilisation, every partial crossed product is isomorphic to a global crossed product. More precisely, the (stabilisation of the) original partial crossed product $(A\rtimes_\alpha G)\otimes\K(L^2(G))$ is naturally isomorphic to $\tilde A \rtimes_{\tilde\alpha} G$, where $\tilde A\defeq(A\rtimes_\alpha G)\rtimes_{\dual\alpha}\dualG$ and $\tilde\alpha\defeq\dual{\dual\alpha}$. The $G$-algebra $(\tilde A,\tilde\alpha)$ has a natural interpretation in terms of the original partial action: it follows from \cite{Abadie:Enveloping}*{Proposition~8.1} that $(\tilde A,\tilde\alpha)$ is a \emph{Morita enveloping action} for the partial action $(A,\alpha)$; we call $(\tilde A,\tilde\alpha)$ the \emph{canonical Morita enveloping action} of $(A,\alpha)$. It is shown in \cite{Abadie:Enveloping}*{{Proposition~6.3}} that all Morita enveloping actions are Morita equivalent, so it is unique up to Morita equivalence. We refer to \cite{Abadie:Enveloping} for
the relevant notion of Morita equivalence for partial actions.

Let us recall that the assertion that $(\tilde A,\tilde\alpha)$ is a Morita enveloping action of $(A,\alpha)$ means that $\tilde A$ contains a (closed, two-sided) ideal $I\sbe \tilde A$ such that the orbit $\{\tilde\alpha_t(I):t\in G\}$ of $I$ generates a dense subspace of $\tilde A$ and such that the partial action on $I$ given by restriction of $\tilde\alpha$ (as described in \cite{Abadie:Enveloping}) is Morita equivalent to the original partial action $(A,\alpha)$. For the canonical Morita enveloping action, the ideal $I$ and also $\tilde A$ (together with their actions) can be described directly in terms of the Fell bundle $A\rtimes^\alpha G$ associated to $(A,\alpha)$ as certain algebras of ``kernels'', but this description does not concern us here. We refer to \cite{Abadie:Enveloping} for further details.

Thus the natural isomorphism~\eqref{eq:double-dual-partial} (that is, our main Theorem~\ref{theo:maximal-dual-coaction} applied for partial actions) can be seen as the statement that every partial crossed product $A\rtimes_\alpha G$ is naturally Morita equivalent to its canonical Morita enveloping crossed product. As already mentioned above, it is proven in \cite{Abadie:Enveloping} that all Morita enveloping actions of a given partial action $(A,\alpha)$ are Morita equivalent. It follows that the full
crossed product $A\rtimes_{\alpha}G$  is Morita equivalent to the full crossed product of any of the Morita enveloping actions of
$(A,\alpha)$.
Note that the paper \cite{Abadie:Enveloping} by Abadie  only contained a version of this result for the reduced crossed products (see
 \cite{Abadie:Enveloping}*{Proposition~4.6}).  The version for full crossed products was  obtained in a second paper \cite{Abadie-Perez:Amenability}*{Corollary~1.3} by Fernando Abadie in colaboration with Laura P\'erez.
We now use our approach to generalise these results
to all exotic crossed products related to Morita compatible crossed-product functors:

\begin{corollary}
For every partial action $(A,\alpha)$ of $G$, and for every Morita compatible crossed-product functor $\rtimes_\mu$,
the partial crossed product $A\rtimes_{\alpha,\mu}G$ is stably isomorphic to its canonical Morita enveloping crossed product $\tilde A\rtimes_{\alpha,\mu}G$. More precisely, the canonical isomorphism~\eqref{eq:double-dual-partial} factors through an isomorphism
\begin{equation}\label{eq:double-dual-partial-exotic}
\tilde A\rtimes_{\tilde\alpha,\mu}G\cong (A\rtimes_{\alpha,\mu} G)\otimes\K(L^2(G)).
\end{equation}
More generally, every other Morita enveloping action $(B,\beta)$ of $(A,\alpha)$ has crossed product $B\rtimes_{\beta,\mu}G\sim_M A\rtimes_{\alpha,\mu}G$.
\end{corollary}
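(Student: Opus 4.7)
The plan is to deduce the corollary essentially immediately from Theorem~\ref{theo:maximal-dual-coaction}, by applying Definition~\ref{def-functor} to the semidirect Fell bundle $\B\defeq A\rtimes^\alpha G$ and then invoking Abadie's uniqueness theorem for Morita enveloping actions together with the Morita compatibility of $\rtimes_\mu$ for the final statement.

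For the setup I would observe that for $\B=A\rtimes^\alpha G$ one has $C^*(\B)=A\rtimes_\alpha G$ with dual coaction $\delta_\B=\dual\alpha$. By the very definition of the canonical Morita enveloping action this means $\tilde A=C^*(\B)\rtimes_{\delta_\B}\dualG$ and $\tilde\alpha=\dual{\delta_\B}$, so the isomorphism~\eqref{eq:double-dual-partial} supplied by Theorem~\ref{theo:maximal-dual-coaction} is nothing but
$$\tilde A\rtimes_{\tilde\alpha}G\cong (A\rtimes_\alpha G)\otimes \K(L^2(G)),$$
realised by the Katayama map $\Phi_{C^*(\B)}$.

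Next I would apply Definition~\ref{def-functor} to $\B$: by construction $C^*_\mu(\B)=A\rtimes_{\alpha,\mu}G$ is the unique quotient of $A\rtimes_\alpha G$ making the Katayama map factor through an isomorphism
$$C^*(\B)\rtimes_{\delta_\B}\dualG\rtimes_{\dual{\delta_\B},\mu}G\cong C^*_\mu(\B)\otimes \K(L^2(G)).$$
Unpacking $\tilde A$ and $\tilde\alpha$, the left-hand side is precisely $\tilde A\rtimes_{\tilde\alpha,\mu}G$ (using that $(\tilde A,\tilde\alpha)$ is a genuine $G$-action, so that the original functor $\rtimes_\mu$ applies verbatim), which yields the desired isomorphism~\eqref{eq:double-dual-partial-exotic}. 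For an arbitrary Morita enveloping action $(B,\beta)$, \cite{Abadie:Enveloping}*{Proposition~6.3} produces a $G$-equivariant Morita equivalence $(B,\beta)\sim_M(\tilde A,\tilde\alpha)$, and Morita compatibility of $\rtimes_\mu$ then gives $B\rtimes_{\beta,\mu}G\sim_M\tilde A\rtimes_{\tilde\alpha,\mu}G$; composing with the stable isomorphism~\eqref{eq:double-dual-partial-exotic} finishes with $B\rtimes_{\beta,\mu}G\sim_M A\rtimes_{\alpha,\mu}G$.

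The only real obstacle in this argument — and it is entirely bookkeeping — is to verify that the iterated coaction-crossed product on the left-hand side of Definition~\ref{def-functor} genuinely coincides with the $\mu$-crossed product of the global action $(\tilde A,\tilde\alpha)$. But this is tautological from the definitions of $\tilde A$, $\tilde\alpha$, and Definition~\ref{def-functor} itself, so the corollary is really a translation exercise built on top of Theorem~\ref{theo:maximal-dual-coaction}.
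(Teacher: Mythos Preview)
Your proposal is correct and follows essentially the same argument as the paper: the first isomorphism is obtained by unpacking Definition~\ref{def-functor} for $\B=A\rtimes^\alpha G$ together with the identifications $\tilde A=C^*(\B)\rtimes_{\delta_\B}\dualG$ and $\tilde\alpha=\dual{\delta_\B}$, and the general Morita enveloping case is handled via \cite{Abadie:Enveloping}*{Proposition~6.3} combined with Morita compatibility of $\rtimes_\mu$. Your write-up is slightly more explicit in spelling out the identifications, but the logical content is identical to the paper's proof.
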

\begin{proof}
The first statement is an immediate consequence of our definitions. Indeed, by definition,
the exotic partial crossed product $A\rtimes_{\alpha,\mu}G$ is exactly the quotient of $A\rtimes_{\alpha}G$ that turns~\eqref{eq:double-dual-partial} into the isomorphism~\eqref{eq:double-dual-partial-exotic}. And the final assertion follows from the already mentioned fact that all Morita enveloping actions are Morita equivalent and the assumption that our functor $\rtimes_\mu$ preserves Morita equivalence. Indeed, if $(B,\beta)$ is a Morita enveloping action for $(A,\alpha)$, then $(B,\beta)$ is Morita equivalent to $(\tilde A,\tilde\alpha)$ by \cite{Abadie:Enveloping}*{Proposition~6.3}. Since our crossed-product functor $\rtimes_\mu$ preserves Morita equivalences, we conclude that $B\rtimes_{\beta,\mu}G\sim_M \tilde A\rtimes_{\tilde\alpha,\mu}G\sim_M A\rtimes_{\alpha,\mu}G$.
\end{proof}

We also obtain one of the main results on amenability of partial actions shown in \cite{Abadie-Perez:Amenability}.
Following the terminology from \cite{Abadie-Perez:Amenability}, we say that a partial action $(A,\alpha)$ is \emph{amenable} if its associated Fell bundle is amenable (in the sense of Exel \cite{Exel:Amenability}). Hence, by definition, a partial action $(A,\alpha)$ is amenable if and only if its full and reduced crossed products coincide.

\begin{corollary}
A partial action $(A,\alpha)$ is amenable if and only if its canonical Morita enveloping action $(\tilde A,\tilde\alpha)$ is amenable, if and only if all Morita enveloping actions of $(A,\alpha)$ are amenable. In this case all exotic (partial) crossed products involving these algebras coincide. More generally, given Morita compatible $G$-crossed product functors $\rtimes_\mu$ and $\rtimes_\nu$, we have $A\rtimes_{\alpha,\mu}G= A\rtimes_{\alpha,\nu}G$ if and only if $\tilde A\rtimes_{\tilde\alpha,\mu}G=\tilde A\rtimes_{\tilde\alpha,\nu}G$ if and only if $B\rtimes_{\beta,\mu}G=B\rtimes_{\beta,\nu}G$ for every Morita enveloping action $(B,\beta)$ of $(A,\alpha)$.
\end{corollary}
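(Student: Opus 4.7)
The plan is to derive the entire corollary from the isomorphism $\tilde A\rtimes_{\tilde\alpha,\mu}G \cong (A\rtimes_{\alpha,\mu}G)\otimes\K(L^2(G))$ established in the previous corollary, together with the fact that all Morita enveloping actions of $(A,\alpha)$ are mutually Morita equivalent, as stated in \cite{Abadie:Enveloping}*{Proposition~6.3}. First I would observe that the amenability equivalences are exactly the special case where $\rtimes_\mu$ is the universal and $\rtimes_\nu$ is the reduced crossed-product functor: a Fell bundle is amenable by definition precisely when these two completions coincide. Therefore it suffices to establish the more general $\mu$--$\nu$ equivalence, and the amenability statements drop out by specialization.

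For the equivalence between the $\mu$--$\nu$ equality for $(A,\alpha)$ and for $(\tilde A,\tilde\alpha)$, I would use that the isomorphism of the previous corollary is natural in the crossed-product functor. Both $\rtimes_\mu$ and $\rtimes_\nu$ are quotients of the universal functor $\rtimes_u$, so Proposition~\ref{prop-mor}, applied to the identity morphism on the Fell bundle $A\rtimes^\alpha G$, provides a commuting diagram
$$
\begin{CD}
\tilde A\rtimes_{\tilde\alpha}G  @>\cong>>  (A\rtimes_{\alpha}G)\otimes\K(L^2(G))\\
@VVV @VVV\\
\tilde A\rtimes_{\tilde\alpha,\mu}G  @>\cong>>  (A\rtimes_{\alpha,\mu}G)\otimes\K(L^2(G))
\end{CD}
$$
and likewise with $\mu$ replaced by $\nu$. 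Since tensoring by $\K(L^2(G))$ preserves and reflects equality of quotients, the kernel of the vertical surjection on the left corresponds to $\ker\bigl(A\rtimes_\alpha G\onto A\rtimes_{\alpha,\mu}G\bigr)\otimes \K(L^2(G))$. Hence the quotients of $\tilde A\rtimes_{\tilde\alpha}G$ corresponding to $\mu$ and $\nu$ coincide if and only if the corresponding quotients of $A\rtimes_\alpha G$ coincide. This yields the first equivalence and, specialized to $\mu=u$, $\nu=r$, the equivalence of amenability for $(A,\alpha)$ and $(\tilde A,\tilde\alpha)$.

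For an arbitrary Morita enveloping action $(B,\beta)$ of $(A,\alpha)$, I would combine the Morita equivalence $(B,\beta)\sim_M(\tilde A,\tilde\alpha)$ from \cite{Abadie:Enveloping}*{Proposition~6.3} with Morita compatibility of both $\rtimes_\mu$ and $\rtimes_\nu$. If $(X,\gamma)$ implements the equivariant Morita equivalence, then $X\rtimes_{\gamma,\mu}G$ and $X\rtimes_{\gamma,\nu}G$ are equivalence bimodules between the respective $\mu$- and $\nu$-crossed products, both arising as completions of the common pre-equivalence bimodule $C_c(G,X)$. The Rieffel correspondence between ideals therefore matches the kernel of $\tilde A\rtimes_{\tilde\alpha}G\onto \tilde A\rtimes_{\tilde\alpha,\mu}G$ with that of $B\rtimes_\beta G\onto B\rtimes_{\beta,\mu}G$, and similarly for $\nu$. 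Consequently $\tilde A\rtimes_{\tilde\alpha,\mu}G=\tilde A\rtimes_{\tilde\alpha,\nu}G$ if and only if $B\rtimes_{\beta,\mu}G=B\rtimes_{\beta,\nu}G$, completing the proof.

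The step requiring the most care will be the last one: verifying that the Morita equivalence induced by $(X,\gamma)$ genuinely intertwines the quotient maps to the $\mu$- and $\nu$-completions. Because $\mu$ and $\nu$ need not be comparable, the cleanest way to proceed is to take the full crossed products as common sources and invoke the functoriality and naturality built into the definition of Morita compatibility, so that both $X\rtimes_{\gamma,\mu}G$ and $X\rtimes_{\gamma,\nu}G$ arise as quotients of $X\rtimes_\gamma G$ with ideals matching those defining the corresponding crossed-product completions on either side; the rest is then just the fact that Morita equivalence respects the ideal lattice.
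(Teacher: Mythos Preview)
Your proposal is correct and follows essentially the same approach as the paper: reduce the amenability statement to the general $\mu$--$\nu$ equality by specialising to the universal and reduced functors, and then deduce the latter from the Morita equivalences of the previous corollary together with the Rieffel correspondence between ideals (using that the $\mu$- and $\nu$-bimodules are quotients of the full one). The only cosmetic difference is that the paper applies the Rieffel correspondence directly to the Morita equivalence $A\rtimes_\alpha G\sim_M B\rtimes_\beta G$ from the previous corollary, whereas you first handle $(\tilde A,\tilde\alpha)$ via the explicit stabilisation isomorphism and then pass from $(\tilde A,\tilde\alpha)$ to an arbitrary $(B,\beta)$; this is the same argument with one extra intermediate step.
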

\begin{proof}
The first assertion will follow from the last assertion by taking the full and reduced crossed products for $\rtimes_\mu$ and $\rtimes_\nu$.
To prove the last assertion notice that (by definition) the equality $A\rtimes_{\alpha,\mu}G=A\rtimes_{\alpha,\nu}G$ means that the ideal in the full crossed product $A\rtimes_\alpha G$ corresponding to the quotient maps $A\rtimes_\alpha G\to A\rtimes_{\alpha,\mu}G, A\rtimes_{\alpha,\nu}G$ coincide, and of course the same meaning is to be given for the equality $B\rtimes_{\beta,\mu}G=B\rtimes_{\beta,\nu}G$. But then the last assertion in the statement follows from the Rieffel correspondence between ideals induced by the Morita equivalences $A\rtimes_{\alpha,\mu}G\sim_M B\rtimes_{\beta,\mu}G$ and $A\rtimes_{\alpha,\nu}G\sim_M B\rtimes_{\beta,\nu}G$ and the fact that both are quotients of the Morita equivalence for full crossed products: $A\rtimes_\alpha G\sim_M B\rtimes_\beta G$.
\end{proof}

As a direct consequence of our Corollary~\ref{cor:K-amenable->KK-equivalent}, we also obtain the following result:

\begin{corollary}
Let $(A,\alpha)$ be a partial action of a locally compact $K$-amenable group.
If $\rtimes_\mu$ is a correspondence $G$-crossed-product functor, then the quotient homomorphism $A\rtimes_{\alpha}G\onto A\rtimes_{\alpha,\mu} G$ is a $KK$\nb-equivalence. In particular, $A\rtimes_{\alpha,\mu}G$ has the same $K$-theory and $K$-homology as $A\rtimes_\alpha G$.
\end{corollary}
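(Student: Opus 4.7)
The proof is essentially a direct translation of Corollary~\ref{cor:K-amenable->KK-equivalent} to the setting of partial actions via the semidirect Fell bundle construction. The plan is to identify the relevant crossed products with cross-sectional algebras of the associated Fell bundle and then invoke the Fell-bundle version of the $K$-amenability theorem.

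First I would recall that the partial action $(A,\alpha)$ of $G$ gives rise to the semidirect Fell bundle $\B := A\rtimes^\alpha G$ over $G$. By the very definitions set up in Section~\ref{Sec:Partial-Actions}, one has
\[
A\rtimes_\alpha G \;=\; C^*(A\rtimes^\alpha G) \;=\; C^*(\B)
\qquad\text{and}\qquad
A\rtimes_{\alpha,\mu} G \;=\; C^*_\mu(A\rtimes^\alpha G) \;=\; C^*_\mu(\B),
\]
and under these identifications the canonical quotient map $A\rtimes_\alpha G\onto A\rtimes_{\alpha,\mu}G$ is precisely the quotient $C^*(\B)\onto C^*_\mu(\B)$ induced by the identity on $C_c(\B)$.

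Next, since $G$ is assumed to be $K$-amenable and $\rtimes_\mu$ is a correspondence crossed-product functor, Corollary~\ref{cor:K-amenable->KK-equivalent} applies to the Fell bundle $\B = A\rtimes^\alpha G$ and asserts that both maps in
\[
C^*(\B) \onto C^*_\mu(\B) \onto C^*_r(\B)
\]
are $KK$-equivalences. In particular, the first arrow is a $KK$-equivalence, which, via the identifications above, is the quotient $A\rtimes_\alpha G\onto A\rtimes_{\alpha,\mu}G$ in the statement.

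Finally, since $KK$-equivalences induce isomorphisms in both $K$-theory and $K$-homology (via the Kasparov product on either side with the class of the map), the ``in particular'' assertion is immediate. There is no real obstacle here: the only work was done in the global-action Fell-bundle version (Corollary~\ref{cor:K-amenable->KK-equivalent}), which in turn rested on the main Theorem~\ref{theo:maximal-dual-coaction}; the present corollary is just the specialization of that statement to the semidirect Fell bundle of a partial action, together with the observation that the $\mu$-partial crossed product is defined in exactly that manner.
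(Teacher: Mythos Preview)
Your proof is correct and follows exactly the paper's approach: the paper states this corollary as ``a direct consequence of our Corollary~\ref{cor:K-amenable->KK-equivalent}'' and gives no further proof, and your argument spells out precisely that translation via the identifications $A\rtimes_\alpha G = C^*(\B)$ and $A\rtimes_{\alpha,\mu}G = C^*_\mu(\B)$ for $\B = A\rtimes^\alpha G$.
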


For partial actions of \emph{discrete} groups, the above result was proven by McClanahan in \cite{McClanahan:K-theory-partial} for the special case of the quotient map $A\rtimes_{\alpha}G\onto A\rtimes_{\alpha,r}G$ linking the full and reduced crossed products by partial actions. Notice that the result of  McClanahan does not imply the result for general exotic crossed products for discrete groups.  Indeed, as shown in \cite{Buss-Echterhoff-Willett:Exotic}, there are examples of crossed-product functors that are
not correspondence functors for which the above result fails even for crossed products by ordinary actions.
 In the recent paper by Ara and Exel \cite{Ara-Exel:K-theory} (see in particular Corollary~6.9) the authors have applied the result by McClanahan to some interesting partial actions of free groups associated to separated graphs in order to deduce that certain full and reduced crossed products have the same $K$-theory (and the $K$-theory is effectively computed in \cite{Ara-Exel:K-theory}).
 By the above result these computations extend to the respective exotic crossed product related to any given correspondence crossed-product functor.

\section{Maximal coactions of discrete groups}

As a bonus, we derive in this section a characterisation of maximal coactions of discrete groups.
Recall from \cite{Ng:Discrete-coactions,Quigg:Discrete-coactions} that every coaction $\delta:B\to B\otimes C^*(G)$ of a discrete group $G$
determines a Fell bundle $\B$ over $G$ with fibres
$$B_t = \{b\in B:\delta(b)=b\otimes u_t\}$$
where $u:G\to U(C^*(G))$ denotes the inclusion map. There is a canonical embedding
$$C_c(\B)=\spn\left(\cup_{t\in G} B_t\right)\into B$$
which then extends to a surjective $\delta_\B-\delta$-equivariant
$*$-homomorphism $\kappa:C^*(\B)\onto B$.
On the other hand, it has also been observed by Quigg that the dual coaction $\delta_n$ of $G$ on $C_r^*(\B)$
is the normalisation of $(B,\delta)$, so that there is also a $\delta-\delta_n$-equivariant $*$-homomorphism
$\Lambda_B: B\mapsto B_r:=C_r^*(\B)$. This shows that, in a sense, we may view $B$ as an exotic completion of $C_c(\B)$.
We know from \cite{Echterhoff-Kaliszewski-Quigg:Maximal_Coactions} (and now also from our main Theorem~\ref{theo:maximal-dual-coaction}) that $(C^*(\B),\delta_\B)$ is the maximalisation of $(B,\delta)$, so that the quotient
maps $C^*(\B)\stackrel{\kappa}{\longrightarrow} B\stackrel{\Lambda_B}{\longrightarrow} C_r^*(\B)$ induce
isomorphisms
$$C^*(\B)\rtimes_{\delta_\B}\widehat{G}\congto B\rtimes_{\delta}\widehat{G}\congto C_r^*(\B)\rtimes_{\delta_n}\widehat{G}.$$

\begin{theorem}\label{theo:maximal-discrete}
Let $G$ be a discrete group and let $\delta\colon B\to B\otimes C^*(G)$ be a coaction.
Let $\B$ be the Fell bundle over $G$ corresponding to $\delta$ as explained above.
Then the following assertions are equivalent.
\begin{enumerate}
\item $(B,\delta)$ is maximal;
\item the canonical $*$-homomorphism $C^*(\B)\onto B$ is an isomorphism;
\item $\delta$ can be lifted to a \Star{}homomorphism $\delta_\max\colon B\to B\otimes_{\max}C^*(G)$;
\item the reduction $\delta^r\defeq (\id\otimes\lambda)\circ \delta\colon B\to B\otimes C^*_\red(G)$ can be lifted to a \Star{}homomorphism $\delta_\max^r\colon B\to B\otimes_{\max}C^*_r(G)$;
\item the \Star{}homomorphism $(\Lambda_B\otimes\id)\circ \delta^r = (\Lambda_B\otimes\lambda)\circ \delta\colon B\to B_r\otimes C^*_\red(G)$
can be lifted to a  homomorphism $(\Lambda_B\otimes_\max\id)\circ\delta_\max^r\colon B\to B_r\otimes_\max C^*_\red(G)$.
\end{enumerate}
Moreover, if they exist, the lifted homomorphisms in (3), (4), and (5) are all faithful.
\end{theorem}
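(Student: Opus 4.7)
First, I plan to prove the cycle (1) $\Leftrightarrow$ (2) $\Rightarrow$ (3) $\Rightarrow$ (4) $\Rightarrow$ (5) $\Rightarrow$ (2), and then verify the faithfulness of the lifts. The equivalence (1) $\Leftrightarrow$ (2) is already explained in the paragraph preceding the theorem: by the Echterhoff--Quigg identification of discrete coactions with completions of Fell bundles combined with Theorem~\ref{theo:maximal-dual-coaction}, $(C^*(\B),\delta_\B)$ is the maximalisation of $(B,\delta)$, so maximality is equivalent to $\kappa\colon C^*(\B)\onto B$ being an isomorphism. For (2) $\Rightarrow$ (3), observe that the fibrewise-linear assignment $b_t \mapsto b_t \otimes u_t$ is a $*$-representation of $\B$ into $\M(B \otimes_{\max} C^*(G))$ (the identity $(b_s \otimes u_s)(b_t \otimes u_t) = b_s b_t \otimes u_{st}$ gives multiplicativity, and involution is analogous); inductive-limit continuity is automatic for discrete $G$, so Proposition~\ref{prop-rep} yields the desired lift $\delta_{\max}$. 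The implications (3) $\Rightarrow$ (4) and (4) $\Rightarrow$ (5) are routine functoriality: compose with $\id_B \otimes_{\max} \lambda$ and $\Lambda_B \otimes_{\max} \id$, respectively, using that $\otimes_{\max}$ preserves quotient maps.

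The only nontrivial implication is (5) $\Rightarrow$ (2). The plan is to show that every $*$-representation $\sigma\colon \B \to \BB(K)$ extends to a $*$-representation of $B$; this will give $\|b\|_{C^*(\B)} \leq \|b\|_B$ on $C_c(\B)$ and combined with the automatic reverse inequality shows that $\kappa$ is an isometric isomorphism. By Fell's absorption principle for Fell bundles (in the style of \cite{ExelNg:ApproximationProperty}, cf.\ Remark~\ref{rem:regular=normalisation}), the tensored representation $\sigma \otimes \rho\colon b_t \mapsto \sigma_t(b_t) \otimes \rho_t$ on $K \otimes \ell^2(G)$ factors through $B_r = C^*_r(\B)$, yielding $\pi_\sigma\colon B_r \to \BB(K \otimes \ell^2(G))$. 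Since the left and right regular representations of $G$ commute, $\pi_\sigma$ commutes with $1 \otimes \lambda\colon G \to \U(K \otimes \ell^2(G))$, and the commuting pair integrates to a $*$-homomorphism $\Phi\colon B_r \otimes_{\max} C^*_r(G) \to \BB(K \otimes \ell^2(G))$. Composing with the lift $\tilde\delta\colon B \to B_r \otimes_{\max} C^*_r(G)$ yields $\Sigma \defeq \Phi \circ \tilde\delta$ satisfying $\Sigma(b_t) = \sigma_t(b_t) \otimes \rho_t\lambda_t$. The key observation is that $\rho_t\lambda_t$ implements conjugation by $t$ on $\ell^2(G)$ and hence fixes the vector $\delta_e$; thus $K \otimes \C\delta_e$ is $\Sigma$-invariant, and the restriction identifies with $\sigma$ via the obvious unitary $K \otimes \C\delta_e \cong K$. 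This exhibits $\sigma$ as a subrepresentation of $\Sigma$, yielding $\|\sigma(b)\| \leq \|\Sigma(b)\| \leq \|b\|_B$ for $b \in C_c(\B)$ as desired.

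For the faithfulness claim: the counit $\epsilon\colon C^*(G) \to \C$ provides a left inverse $\id_B \otimes \epsilon$ of $\delta_{\max}$, so the lift in (3) is injective. For the lifts in (4) and (5), once (2) is established, applying the construction above with $\sigma$ chosen to be a faithful $*$-representation of $\B$ (which is faithful on $C^*(\B) = B$) yields a $\Sigma$ that is faithful --- because its restriction to $K \otimes \C\delta_e$ is the faithful $\sigma$ --- which forces $\tilde\delta$ and hence also $\delta^r_{\max}$ to be injective. The main obstacle is the representation-theoretic construction in (5) $\Rightarrow$ (2); the combination of the Fell absorption principle and the invariance of $\delta_e$ under the conjugation representation is the crucial ingredient, and it is the latter fact (specific to discrete $G$) that explains why this clean characterisation does not extend verbatim to general locally compact groups.
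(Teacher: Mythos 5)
Your proposal is correct and, apart from the key implication, follows the same skeleton as the paper: (1)$\Leftrightarrow$(2) via the maximalisation $(C^*(\B),\delta_\B)$, (2)$\Rightarrow$(3) via the universal property of $C^*(\B)$ (your integration of the bundle representation $b_t\mapsto b_t\otimes u_t$ through Proposition~\ref{prop-rep} is exactly what the paper means by this), and (3)$\Rightarrow$(4)$\Rightarrow$(5) by functoriality of $\otimes_\max$. Where you genuinely diverge is (5)$\Rightarrow$(2): the paper does not argue representation-theoretically but instead quotes \cite{Ara-Exel-Katsura:Dynamical_systems}*{Theorem~6.2} --- faithfulness of the canonical map $C^*(\B)\to C^*_r(\B)\otimes_\max C^*_r(G)$, $b_t\mapsto\Lambda_\B(b_t)\otimes\lambda_t$ --- and feeds it into the commutative diagram~\eqref{eq-diag}, whose injective bottom arrow forces $\kappa$ to be injective; your argument (Fell absorption makes $\sigma\otimes\rho$ factor through $C^*_r(\B)$, this commutes with $1\otimes\lambda$, the commuting pair integrates over $B_r\otimes_\max C^*_r(G)$, and compression to the invariant subspace $K\otimes\C\delta_e$ recovers $\sigma$) is in substance a self-contained proof of that cited theorem --- indeed it is the Kirchberg-style idea that, as the paper itself remarks, underlies the Ara--Exel--Katsura result. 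What your route buys is a proof with no external black box and a transparent explanation of where discreteness enters (the fixed vector $\delta_e$ of the conjugation representation $t\mapsto\rho_t\lambda_t$); what the paper's route buys is brevity and the faithfulness assertions for (4) and (5) falling out of the same diagram, where your separate counit argument $(\id_B\otimes_\max\eps)\circ\delta_\max=\id_B$ for (3) is a nice addition the paper leaves implicit. One point deserves a sentence in a final write-up: your identity $\Sigma(b_t)=\sigma(b_t)\otimes\rho_t\lambda_t$ tacitly assumes that the lift in (5) sends $b_t$ to $\Lambda_B(b_t)\otimes_\max\lambda_t$, and not merely to some preimage of $\Lambda_B(b_t)\otimes\lambda_t$ under the quotient onto the minimal tensor product; this is precisely the same tacit step as the asserted commutativity of the paper's diagram~\eqref{eq-diag}, so you are at the same level of rigour as the paper here, but you should flag it rather than pass over it silently.
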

\begin{proof}
The equivalence (1)$\Leftrightarrow$(2) follows from the fact that $(C^*(\B),\delta_\B)$ is the
maximisation of $(B,\delta)$ by \cite[Proposition~4.2]{Echterhoff-Kaliszewski-Quigg:Maximal_Coactions}.
Notice that (3) holds for  $(B,\delta)=(C^*(\B),\delta_\B)$ because of the universal property of $C^*(\B)$, so we also get (2)$\Rightarrow$(3), and it is obvious that (3)$\Rightarrow$(4). We will finish with the proof of the implications (4)$\Rightarrow$(5)$\Rightarrow$(2). For this assume that (4) holds. It is shown in \cite{Ara-Exel-Katsura:Dynamical_systems}*{Theorem~6.2} that for every Fell bundle $\B$ over a discrete group $G$, the representation of $\B$ into $C^*_r(\B)\otimes_\max C^*_\red(G)$ given by $b_t\to b_t\otimes_\max \lambda_t$ extends to a faithful \Star{}homomorphism $(\Lambda_{\B}\otimes_\max\lambda)\circ \delta_\B \colon C^*(\B)\to C^*_r(\B)\otimes_\max C^*_r(G)$
(and this is exactly the lift homomorphism in (5) for the  coaction $\delta_\B\colon C^*(\B)\to C^*(\B)\otimes C^*(G)$).
Now notice that the homomorphism $\kappa\colon C^*(\B)\to B$ fits into the commutative diagram:
\begin{equation}\label{eq-diag}
\begin{CD}
B @>\delta_\max^r >> B\otimes_\max C^*_r(G)\\
@A\kappa AA  @VV\Lambda_B\otimes_\max\id V\\
C^*(\B) @> (\Lambda_{\B}\otimes_\max\lambda)\circ \delta_\B >> C^*_r(\B)\otimes_\max C^*_r(G).
\end{CD}
\end{equation}
Since $(\Lambda_{\B}\otimes_\max\lambda)\circ \delta_\B$ is injective, it follows that $\kappa$ is also injective and therefore an isomorphism (so we  just proved (4)$\Rightarrow$(2)).
But then $(\Lambda_B\otimes_\max\id)\circ\delta_\max^r=(\Lambda_{\B}\otimes_\max\lambda)\circ \delta_\B$,
hence $(\Lambda_B\otimes_\max\id)\circ\delta_\max^r$ is injective. Hence (4)$\Rightarrow$(5).
Conversely, if (5) holds, then
diagram (\ref{eq-diag}) implies that $\kappa$ is injective and therefore an isomorphism $C^*(\B)\congto B$. Hence (5)$\Rightarrow$(2).
We saw above that  $(\Lambda_B\otimes_\max\id)\circ\delta_\max^r$, if exists, is faithful. But then
$\delta_\max^r$ and $\delta^r$ are faithful as well.
\end{proof}

\begin{remark}
If $\delta\colon B\to \M(B\otimes C^*(G))$ is a maximal coaction of a locally compact group $G$, then it is Morita equivalent to a dual coaction on a maximal crossed product. Using this it is not difficult to see that such a coaction lifts to $\delta_\max\colon B\to \M(B\otimes_\max C^*(G))$.
But the converse is not true in general and the above theorem does not extend to general
locally compact groups $G$. The problem is that by fundamental results of Choi-Effros and Connes \cite{Choi-Effros, Connes:injective}, the full and reduced group algebras of (almost) connected  second countable groups are always nuclear -- even if the groups are not amenable (like $G=SL_2(\Real)$).
But then it is clear that the dual coaction $\delta_{G,r}:C^*_r(G)\to \M(C^*_r(G)\otimes C^*(G))$,
which is not maximal if $G$ is not amenable,
extends to a faithful map $\delta_{G,r}^\max:C^*_r(G)\to \M(C^*_r(G)\otimes_\max C^*(G))$.
\end{remark}

Notice that the main ingredient in the proof of Theorem~\ref{theo:maximal-discrete} is \cite{Ara-Exel-Katsura:Dynamical_systems}*{Theorem~6.2}, which, as indicated by the authors, is based on an idea of Kirchberg. Part (2) of the following corollary has been already observed in \cite{Ara-Exel-Katsura:Dynamical_systems}. Recall from \cite{Exel:Amenability} that a Fell bundle $\B$ is said to be \emph{amenable} if the regular representation $\Lambda_\B\colon C^*(\B)\to C^*_r(\B)$ is faithful.

\begin{corollary}
Let $G$ be a discrete group.
\begin{enumerate}
\item A Fell bundle $\B$  over $G$ is amenable if and only if the dual coaction
    $\delta_\B$ on $C^*(\B)$ is normal, if and only if the the dual coaction $\delta_{\B,r}$ on $C^*_r(\B)$ is maximal.
\item If the full or reduced cross-sectional \cstar{}algebra of a Fell bundle $\B$ over $G$ is nuclear, then $\B$ is amenable.
\item If $G$ is amenable (that is, if $C^*_r(G)$ or $C^*(G)$ is nuclear), then every Fell bundle over $G$ is amenable.
\end{enumerate}
\end{corollary}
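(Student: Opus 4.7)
For part~(1), the strategy is to reduce both equivalences to the trivial fact that amenability of $\B$ is the statement that $\Lambda_\B\colon C^*(\B)\to C^*_r(\B)$ is an isomorphism. First, Remark~\ref{rem:regular=normalisation} exhibits $(C^*_r(\B),\delta_{\B,r})$ as the normalisation of $(C^*(\B),\delta_\B)$ with $\Lambda_\B$ as normalisation map; since a coaction is normal exactly when its reduction (equivalently, its normalisation map) is injective, $\delta_\B$ is normal iff $\Lambda_\B$ is an isomorphism. Second, Theorem~\ref{theo:maximal-dual-coaction} says $(C^*(\B),\delta_\B)$ is maximal, so together with the identification of $\Lambda_\B$ as the normalisation quotient it exhibits $(C^*(\B),\delta_\B)$ as \emph{the} maximalisation of $(C^*_r(\B),\delta_{\B,r})$. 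A coaction is itself maximal iff its maximalisation map is an isomorphism, so $\delta_{\B,r}$ is maximal iff $\Lambda_\B$ is an isomorphism. (Alternatively, this can be read off directly from the Ng--Quigg characterisation recalled in the Introduction, since the Fell bundle associated to $\delta_{\B,r}$ is again $\B$, as $\Lambda_\B$ is isometric on every fibre $B_t$.)

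For parts~(2) and~(3), I will apply the implication (4)$\Rightarrow$(1) of Theorem~\ref{theo:maximal-discrete} to the coaction $(C^*_r(\B),\delta_{\B,r})$. The key observation is that nuclearity of either tensor factor collapses the minimal and maximal \cstar{}tensor products: if $C^*_r(\B)$ or $C^*_r(G)$ is nuclear, then the reduction $(\id\otimes\lambda)\circ\delta_{\B,r}\colon C^*_r(\B)\to C^*_r(\B)\otimes C^*_r(G)$ automatically takes values in $C^*_r(\B)\otimes_\max C^*_r(G)$, exactly as condition~(4) of Theorem~\ref{theo:maximal-discrete} demands. Hence $\delta_{\B,r}$ is maximal, and part~(1) yields that $\B$ is amenable. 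For part~(2), nuclearity of $C^*(\B)$ passes to its quotient $C^*_r(\B)$, so either hypothesis reduces to nuclearity of $C^*_r(\B)$. For part~(3), amenability of the discrete group $G$ is equivalent to nuclearity of $C^*_r(G)$ (equivalently of $C^*(G)$), supplying the nuclearity of the right-hand tensor factor.

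I expect no substantial obstacle: once Theorem~\ref{theo:maximal-discrete} is in place, each assertion is essentially a short translation between the various characterisations of amenability that the paper has assembled. The only care required is in simultaneously identifying $\Lambda_\B$ as the normalisation map for $(C^*(\B),\delta_\B)$ and, in the opposite direction, as the maximalisation quotient for $(C^*_r(\B),\delta_{\B,r})$; both identifications are already supplied by Remark~\ref{rem:regular=normalisation} together with Theorem~\ref{theo:maximal-dual-coaction}.
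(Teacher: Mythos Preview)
Your proof is correct and, for parts (1) and (2), follows essentially the same route as the paper: the paper also reduces (1) to the observation that $\Lambda_\B$ serves simultaneously as the normalisation map for $(C^*(\B),\delta_\B)$ and the maximalisation map for $(C^*_r(\B),\delta_{\B,r})$, and derives (2) by combining (1) with Theorem~\ref{theo:maximal-discrete}.

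For part (3) there is a minor divergence worth noting. You treat (3) as a special case of the same mechanism used in (2), now with the nuclearity sitting on the $C^*_r(G)$ factor rather than on $C^*_r(\B)$; this gives a pleasantly uniform argument for (2) and (3). The paper instead invokes the general fact that for an amenable group every coaction is simultaneously maximal and normal (since full and reduced crossed products agree), and then applies (1). Both arguments are short and standard; yours has the virtue of reusing the exact machinery already set up for (2), while the paper's avoids appealing to Theorem~\ref{theo:maximal-discrete} a second time.
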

We should point out that the third item has already  been shown  by Exel in  \cite{Exel:Amenability}*{Theorem~4.7}.
\begin{proof}
The first statement follows directly from the fact that the regular representation $\Lambda_\B\colon C^*(\B)\to C^*_r(\B)$
can be thought of as either the normalisation map for $(C^*(\B),\delta_\B)$, or the maximalisation map for $(C^*_r(\B),\delta_{\B,r})$.
The second item follows directly from the combination of (1) and Theorem~\ref{theo:maximal-discrete}. And the third item
also follows from (1) and the fact that every $G$-coaction is maximal and normal for amenable $G$.
\end{proof}

Observe that the converse of (2) above does not hold, that is, there are amenable Fell bundles (over discrete groups) for which $C^*(\B)\cong C^*_r(\B)$ is not nuclear. To see an example let $A$ be any non-nuclear \cstar{}algebra and let an amenable group $G$ act trivially on $A$.
Then $A\rtimes_{\id}G=A\rtimes_{\id,r}G$, hence the corresponding Fell bundle $A\rtimes^\id G$ is amenable.
But $A\rtimes_{\id}G\cong A\otimes C^*(G)$ is not nuclear, since the tensor product of a non-nuclear \cstar{}algebra
with a nuclear \cstar{}algebra is never nuclear. 

If $\B$ is not amenable, we know from the above corollary  that $C^*(\B)$ and $C^*_r(\B)$ are non-nuclear \cstar{}algebras, so there exist \cstar{}algebras $D,E$ such that the algebraic tensor products $C^*(\B)\odot D$ and $C^*(\B)\odot E$ do not admit unique \cstar{}norms. Indeed, the methods above allow us to take explicit choices for $D$ and $E$:

\begin{corollary}
If a Fell bundle $\B$ over a discrete group $G$ is not amenable, then all the algebraic tensor products $C^*(\B)\odot C^*_r(G)$, $C^*_r(\B)\odot C^*(G)$ and $C^*_r(\B)\odot C^*_r(G)$ do not admit unique \cstar{}norms, that is,
\begin{align*}
C^*(\B)\otimes_{\max} C^*_r(G)\not= C^*(\B)\otimes_{\min} C^*_r(G), \\
C^*_r(\B)\otimes_{\max} C^*_r(G)\not= C^*_r(\B)\otimes_{\min} C^*_r(G), \\
C^*_r(\B)\otimes_{\max} C^*(G)\not= C^*_r(\B)\otimes_{\min} C^*(G).
\end{align*}
\end{corollary}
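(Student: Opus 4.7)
The plan is to apply Theorem~\ref{theo:maximal-discrete} contrapositively to each of the three inequalities, exploiting the fact that non-amenability of $\B$ produces two coactions with complementary defects. By Theorem~\ref{theo:maximal-dual-coaction} the dual coaction $\delta_\B$ on $C^*(\B)$ is maximal, and by the preceding corollary it fails to be normal precisely when $\B$ is not amenable; conversely, its normalisation $\delta_{\B,r}$ on $C^*_r(\B)$ is normal but fails to be maximal under the same hypothesis. These two observations are the engines driving all three inequalities.

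First I would dispatch the third inequality, $C^*_r(\B)\otimes_{\max} C^*(G)\ne C^*_r(\B)\otimes_{\min} C^*(G)$. If the two completions coincided, the canonical quotient between them would be a \Star{}isomorphism, so composing $\delta_{\B,r}$ with its inverse would produce a \Star{}homomorphism $\delta_{\B,r}^{\max}\colon C^*_r(\B)\to C^*_r(\B)\otimes_{\max}C^*(G)$ lifting $\delta_{\B,r}$. The implication $(3)\Rightarrow(1)$ of Theorem~\ref{theo:maximal-discrete} would then force $\delta_{\B,r}$ to be maximal, contradicting non-amenability. The same reasoning, now with $C^*_r(G)$ in place of $C^*(G)$ and using $(4)\Rightarrow(1)$ applied to the reduction $\delta_{\B,r}^r$, handles the second inequality.

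The first inequality $C^*(\B)\otimes_{\max} C^*_r(G)\ne C^*(\B)\otimes_{\min} C^*_r(G)$ cannot be treated in this way, because here $\delta_\B$ \emph{is} maximal, so a lift to $C^*(\B)\otimes_{\max} C^*_r(G)$ certainly exists. Instead I would exploit the failure of \emph{normality}: by $(1)\Rightarrow(4)$ together with the final assertion of Theorem~\ref{theo:maximal-discrete}, the reduction $\delta_\B^r$ lifts to an \emph{injective} \Star{}homomorphism $\delta_{\max}^r\colon C^*(\B)\to C^*(\B)\otimes_{\max} C^*_r(G)$. If the quotient $\pi\colon C^*(\B)\otimes_{\max} C^*_r(G)\to C^*(\B)\otimes C^*_r(G)$ were an isomorphism, then $\delta_\B^r=\pi\circ\delta_{\max}^r$ would inherit injectivity, so $\delta_\B$ would be normal and, by the previous corollary, $\B$ would be amenable---contradiction.

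No substantial obstacle is expected: once Theorem~\ref{theo:maximal-discrete} is in hand, the argument is purely formal bookkeeping. The only subtle point is the asymmetry between the three cases: the first inequality must be attacked through the injectivity half of condition~(4) rather than the existence-of-a-lift half that settles the other two, precisely because $\delta_\B$ fails to be normal while $\delta_{\B,r}$ fails to be maximal.
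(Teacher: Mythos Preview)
Your proposal is correct and close in spirit to the paper's proof. For the first inequality your argument is identical to the paper's: use that $\delta_\B$ is maximal to obtain the injective lift $\delta^r_{\max}$ into $C^*(\B)\otimes_{\max}C^*_r(G)$, and observe that collapse of the max and min norms would force $\delta_\B^r$ (equivalently $\Lambda_\B$) to be injective, hence $\B$ amenable.

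For the second and third inequalities you take a mildly different route. The paper simply says ``the other cases are treated similarly,'' which most naturally means reusing the injectivity clause of Theorem~\ref{theo:maximal-discrete} via conditions (4) and (5), still starting from the maximal coaction $\delta_\B$ on $C^*(\B)$. You instead switch to the normal coaction $\delta_{\B,r}$ on $C^*_r(\B)$ and argue contrapositively from $(3)\Rightarrow(1)$ and $(4)\Rightarrow(1)$: if the two tensor norms agreed, the lift would exist trivially and force $\delta_{\B,r}$ to be maximal, contradicting the previous corollary. Both routes are short and valid; yours makes the asymmetry between the cases explicit, while the paper's keeps everything phrased in terms of the single maximal coaction $\delta_\B$.
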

\begin{proof}
The dual coaction $\delta_\B\colon C^*(\B)\to C^*(\B)\otimes C^*(G)$ is maximal and hence its reduction $\delta^r_{\B}\colon C^*(\B)\to C^*(\B)\otimes C^*_r(G)$ lifts to an injective homomorphism $\delta_{\B,\max}^r\colon C^*(\B)\to C^*(\B)\otimes_\max C^*_r(G)$.
But the reduction $\delta_\B^r$ is weakly equivalent to $\Lambda_\B\colon C^*(\B)\to C^*_r(\B)$, hence
if $C^*(\B)\otimes_{\max} C^*_r(G) = C^*(\B)\otimes_{\min} C^*_r(G)$, then $\B$ is amenable. This gives the statement for  $C^*(\B)\odot C^*_r(G)$ and the other cases are treated similarly.
\end{proof}

Taking $\B$ to be the trivial Fell bundle $\B=\C\times G$, the above result gives non-uniqueness of \cstar{}norms on mixed tensor products of the form $C^*(G)\odot C^*_r(G)$ and $C_r^*(G)\otimes C_r^*(G)$. This special case is well-known and follows, for instance, from Proposition~6.4.1 in \cite{Brown-Ozawa:Approximations}.\footnote{{We thank Rufus Willett for pointing out this reference to us.}}
Observe that we do not say anything about the tensor product $C^*(\B)\odot C^*(G)$. Indeed, as
shown in \cite[Proposition 8.1]{Kirchberg:Nonsemisplit}, uniqueness of the \cstar{}norm on $C^*(\B)\odot C^*(G)$
 in the case of $\B=\C\times G$, $G=\Free_\infty$ is equivalent to  Connes' embedding conjecture!

\begin{bibdiv}
 \begin{biblist}

\bib{Abadie:Enveloping}{article}{
  author={Abadie, Fernando},
  title={Enveloping actions and Takai duality for partial actions},
  journal={J. Funct. Anal.},
  volume={197},
  date={2003},
  number={1},
  pages={14--67},
  issn={0022-1236},
  review={\MRref{1957674}{2004c:46130}},
  doi={10.1016/S0022-1236(02)00032-0},
}

\bib{Abadie-Perez:Amenability}{article}{
  author={Abadie, Fernando and Mart{\'{\i }} P{\'e}rez, Laura},
  title={On the amenability of partial and enveloping actions},
  journal={Proc. Amer. Math. Soc.},
  volume={137},
  year={2009},
  number={11},
  pages={3689--3693},
  issn={0002-9939},
  doi={10.1090/S0002-9939-09-09998-5},
  review={\MRref {2529875}{2010f:46099}},
}

\bib{Ara-Exel-Katsura:Dynamical_systems}{article}{
  author={Ara, Pere},
  author={Exel, Ruy},
  author={Katsura, Takeshi},
  title={Dynamical systems of type $(m,n)$ and their $\textup {C}^*$\nobreakdash -algebras},
  journal={Ergodic Theory Dynam. Systems},
  volume={33},
  date={2013},
  number={5},
  pages={1291--1325},
  issn={0143-3857},
  review={\MRref {3103084}{}},
  doi={10.1017/S0143385712000405},
}

\bib{Ara-Exel:K-theory}{article}{
  author={Ara, Pere},
  author={Exel, Ruy},
  title={K-theory for the tame $C^*$-algebra of a separated graph},
  status={preprint, to appear in J. Funct. Analysis},
  date={2015},
  note={\arxiv{1503.06067}},
}

\bib{Baum-Guentner-Willett:Expanders}{article}{
  author={Baum, Paul},
  author={Guentner, Erik},
  author={Willett, Rufus},
  title={Expanders, exact crossed products, and the Baum-Connes conjecture},
  status={eprint},
  note={\arxiv {1311.2343}},
    journal={Annals of K-Theory (to appear)},
}

\bib{BG}{article}{
    AUTHOR = {Brown, Nathanial P.},
    author= {Guentner, Erik P.},
     TITLE = {New {$\rm C^\ast$}-completions of discrete groups and
              related spaces},
   JOURNAL = {Bull. Lond. Math. Soc.},
  FJOURNAL = {Bulletin of the London Mathematical Society},
    VOLUME = {45},
      YEAR = {2013},
    NUMBER = {6},
     PAGES = {1181--1193},
      ISSN = {0024-6093},
   MRCLASS = {46L05 (22D20 22D25)},
  MRNUMBER = {3138486},
MRREVIEWER = {Takahiro Sudo},
       DOI = {10.1112/blms/bdt044},
       URL = {http://dx.doi.org/10.1112/blms/bdt044},
}

\bib{Brown-Ozawa:Approximations}{book}{
  author={Brown, Nathanial P.},
  author={Ozawa, Narutaka},
  title={$C^*$\nobreakdash-algebras and finite-dimensional approximations},
  series={Graduate Studies in Mathematics},
  volume={88},
  publisher={American Mathematical Society, Providence, RI},
  date={2008},
  pages={xvi+509},
  isbn={978-0-8218-4381-9},
  isbn={0-8218-4381-8},
  review={\MRref{2391387}{2009h:46101}},
}

\bib{Buss:thesis}{thesis}{
  author={Buss, Alcides},
  title={Generalized fixed point algebras for coactions of locally compact quantum groups},
  date={2007},
  institution={Univ. Münster},
  type={Doctoral Thesis},
}

\bib{Buss-Echterhoff:Exotic_GFPA}{article}{
  author={Buss, Alcides},
  author={Echterhoff, Siegfried},
  title={Universal and exotic generalized fixed-point algebras for weakly proper actions and duality},
  journal={Indiana Univ. Math. Journal},
  volume={63},
  year={2014},
  number={6},
  pages={1659--1701},
  issn={0022-2518},
  note={\arxiv {1304.5697}},
}

\bib{Buss-Echterhoff-Willett:Exotic}{article}{
  author={Buss, Alcides},
  author={Echterhoff, Siegfried},
  author={Rufus Willett},
  title={Exotic crossed products and the Baum-Connes conjecture},
  status={eprint},
  note={\arxiv {1409.4332}},
  journal ={Journal reine und angew. Math. (to appear)},
}

\bib{Choi-Effros}{article}{
   AUTHOR = {Choi, Man Duen},
   author=  {Effros, Edward G.},
     TITLE = {Separable nuclear {$C\sp*$}-algebras and injectivity},
   JOURNAL = {Duke Math. J.},
  FJOURNAL = {Duke Mathematical Journal},
    VOLUME = {43},
      YEAR = {1976},
    NUMBER = {2},
     PAGES = {309--322},
      ISSN = {0012-7094},
   MRCLASS = {46L05},
  MRNUMBER = {0405117 (53 \#8912)},
MRREVIEWER = {Christopher Lance},
}

\bib{Connes:injective}{article}{
    AUTHOR = {Connes, Alain},
     TITLE = {Classification of injective factors. {C}ases {$II_{1},$}
              {$II_{\infty },$} {$III_{\lambda },$} {$\lambda \not=1$}},
   JOURNAL = {Ann. of Math. (2)},
  FJOURNAL = {Annals of Mathematics. Second Series},
    VOLUME = {104},
      YEAR = {1976},
    NUMBER = {1},
     PAGES = {73--115},
      ISSN = {0003-486X},
   MRCLASS = {46L10},
  MRNUMBER = {0454659 (56 \#12908)},
MRREVIEWER = {Francois Combes},
}

\bib{Cuntz:K-amenable}{article}{
   AUTHOR = {Cuntz, Joachim},
     TITLE = {{$K$}-theoretic amenability for discrete groups},
   JOURNAL = {J. Reine Angew. Math.},
  FJOURNAL = {Journal f\"ur die Reine und Angewandte Mathematik},
    VOLUME = {344},
      YEAR = {1983},
     PAGES = {180--195},
      ISSN = {0075-4102},
     CODEN = {JRMAA8},
   MRCLASS = {46L80 (19K99)},
  MRNUMBER = {716254 (86e:46064)},
MRREVIEWER = {Autorreferat},
       DOI = {10.1515/crll.1983.344.180},
       URL = {http://dx.doi.org/10.1515/crll.1983.344.180},
}

\bib{Doran-Fell:Representations}{book}{
  author={Doran, Robert S.},
  author={Fell, James M. G.},
  title={Representations of $^*$\nobreakdash -algebras, locally compact groups, and Banach $^*$\nobreakdash -algebraic bundles. Vol. 1},
  series={Pure and Applied Mathematics},
  volume={125},
  publisher={Academic Press Inc.},
  place={Boston, MA},
  date={1988},
  pages={xviii+746},
  isbn={0-12-252721-6},
  review={\MRref {936628}{90c:46001}},
}

\bib{Doran-Fell:Representations_2}{book}{
  author={Doran, Robert S.},
  author={Fell, James M. G.},
  title={Representations of $^*$\nobreakdash -algebras, locally compact groups, and Banach $^*$\nobreakdash -algebraic bundles. Vol. 2},
  series={Pure and Applied Mathematics},
  volume={126},
  publisher={Academic Press Inc.},
  place={Boston, MA},
  date={1988},
  pages={i--viii and 747--1486},
  isbn={0-12-252722-4},
  review={\MRref {936629}{90c:46002}},
}

\bib{Echterhoff:Morita_twisted}{article}{
  author={Echterhoff, Siegfried},
  title={Morita equivalent twisted actions and a new version of the Packer--Raeburn stabilization trick},
  journal={J. London Math. Soc. (2)},
  volume={50},
  date={1994},
  number={1},
  pages={170--186},
  issn={0024-6107},
  review={\MRref{1277761}{96a:46118}},
  doi={10.1112/jlms/50.1.170},
}

\bib{Echterhoff-Quigg:InducedCoactions}{article}{
  author={Echterhoff, Siegfried},
  author={Quigg, John},
  title={Induced coactions of discrete groups on $C^*$\nobreakdash -algebras},
  journal={Canad. J. Math.},
  volume={51},
  date={1999},
  number={4},
  pages={745--770},
  issn={0008-414X},
  review={\MRref {1701340}{2000k:46094}},
  doi={10.4153/CJM-1999-032-1},
}

\bib{Echterhoff-Kaliszewski-Quigg:Maximal_Coactions}{article}{
  author={Echterhoff, Siegfried},
  author={Kaliszewski, Steven P.},
  author={Quigg, John},
  title={Maximal coactions},
  journal={Internat. J. Math.},
  volume={15},
  date={2004},
  number={1},
  pages={47--61},
  issn={0129-167X},
  doi={10.1142/S0129167X04002107},
  review={\MRref {2039211}{2004j:46087}},
}

\bib{Echterhoff-Kaliszewski-Quigg-Raeburn:Categorical}{article}{
  author={Echterhoff, Siegfried},
  author={Kaliszewski, Steven P.},
  author={Quigg, John},
  author={Raeburn, Iain},
  title={A categorical approach to imprimitivity theorems for $C^*$\nobreakdash -dynamical systems},
  journal={Mem. Amer. Math. Soc.},
  volume={180},
  date={2006},
  number={850},
  pages={viii+169},
  issn={0065-9266},
  review={\MRref {2203930}{2007m:46107}},
  doi={10.1090/memo/0850},
}

\bib{Exel:Circle}{article}{
  author={Exel, Ruy},
  title={Circle actions on $C^*$\nobreakdash-algebras, partial automorphisms, and a generalized Pimsner--Voiculescu exact sequence},
  journal={J. Funct. Anal.},
  volume={122},
  date={1994},
  number={2},
  pages={361--401},
  issn={0022-1236},
  review={\MRref{1276163}{95g:46122}},
  doi={10.1006/jfan.1994.1073},
}

\bib{Exel:TwistedPartialActions}{article}{
  author={Exel, Ruy},
  title={Twisted partial actions: a classification of regular $C^*$\nobreakdash -algebraic bundles},
  journal={Proc. London Math. Soc. (3)},
  volume={74},
  date={1997},
  number={2},
  pages={417--443},
  issn={0024-6115},
  review={\MRref {1425329}{98d:46075}},
  doi={10.1112/S0024611597000154},
}

\bib{Exel:Amenability}{article}{
  author={Exel, Ruy},
  title={Amenability for Fell bundles},
  journal={J. Reine Angew. Math.},
  volume={492},
  date={1997},
  pages={41--73},
  issn={0075-4102},
  review={\MRref {1488064}{99a:46131}},
  doi={10.1515/crll.1997.492.41},
}

\bib{ExelNg:ApproximationProperty}{article}{
  author={Exel, Ruy},
  author={Ng, {Ch}i-Keung},
  title={Approximation property of $C^*$\nobreakdash -algebraic bundles},
  journal={Math. Proc. Cambridge Philos. Soc.},
  volume={132},
  date={2002},
  number={3},
  pages={509--522},
  issn={0305-0041},
  doi={10.1017/S0305004101005837},
  review={\MRref {1891686}{2002k:46189}},
}

\bib{Exel:Book}{book}{
  author={Exel, Ruy},
  title={Partial Dynamical Systems, Fell Bundles and Applications},
  note={To appear in NYJM book series},
  status={preprint},
  eprint={http://mtm.ufsc.br/~exel/papers/pdynsysfellbun.pdf},
}

\bib{Fell:Induced}{book}{
  author={Fell, James M. G.},
  title={Induced representations and {B}anach {$\sp *$}-algebraic bundles},
  series={Lecture Notes in Mathematics, Vol. 582},
  note={With an appendix due to A. Douady and L. Dal Soglio-H{\'e}rault},
  publisher={Springer-Verlag, Berlin-New York},
  year={1977},
  pages={iii+349},
  review={\MRref {0457620}{56 \#15825}},
}

\bib{IT}{article}{
  AUTHOR = {Imai, Sh{\=o}},
  author =  {Takai, Hiroshi},
     TITLE = {On a duality for {$C^{\ast} $}-crossed products by a locally
              compact group},
   JOURNAL = {J. Math. Soc. Japan},
  FJOURNAL = {Journal of the Mathematical Society of Japan},
    VOLUME = {30},
      YEAR = {1978},
    NUMBER = {3},
     PAGES = {495--504},
     CODEN = {NISUBC},
   MRCLASS = {46L55 (22D35)},
  MRNUMBER = {500719 (81h:46090)},
       DOI = {10.2969/jmsj/03030495},
       URL = {http://dx.doi.org/10.2969/jmsj/03030495},
}
	
\bib{JV}{article}{
    AUTHOR = {Julg, Pierre},
    author=  {Valette, Alain},
     TITLE = {{$K$}-theoretic amenability for {${\rm SL}_{2}({\bf
              Q}_{p})$}, and the action on the associated tree},
   JOURNAL = {J. Funct. Anal.},
  FJOURNAL = {Journal of Functional Analysis},
    VOLUME = {58},
      YEAR = {1984},
    NUMBER = {2},
     PAGES = {194--215},
      ISSN = {0022-1236},
     CODEN = {JFUAAW},
   MRCLASS = {22E50 (18F25 19K99 46L80 46M20 58G12)},
  MRNUMBER = {757995 (86b:22030)},
MRREVIEWER = {Alan L. T. Paterson},
       DOI = {10.1016/0022-1236(84)90039-9},
       URL = {http://dx.doi.org/10.1016/0022-1236(84)90039-9},
}

\bib{Kaliszewski-Landstad-Quigg:Exotic}{article}{
  author={Kaliszewski, Steven P.},
  author={Landstad, Magnus B.},
  author={Quigg, John},
  title={Exotic group $C^*$\nobreakdash -algebras in noncommutative duality},
  journal={New York J. Math.},
  volume={19},
  date={2013},
  pages={689--711},
  issn={1076-9803},
  review={\MRref {3141810}{}},
  eprint={http://nyjm.albany.edu/j/2013/19_689.html},
}

\bib{Kaliszewski-Landstad-Quigg:Exotic-coactions}{article}{
  author={Kaliszewski, Steven P.},
  author={Landstad, Magnus B.},
  author={Quigg, John},
  title={Exotic coactions},
  status={eprint},
  date={2013},
  note={\arxiv {1305.5489}},
}

\bib{Kaliszewski-Muhly-Quigg-Williams:Coactions_Fell}{article}{
  author={Kaliszewski, Steven P.},
  author={Muhly, Paul S.},
  author={Quigg, John},
  author={Williams, Dana P.},
  title={Coactions and Fell bundles},
  journal={New York J. Math.},
  volume={16},
  date={2010},
  pages={315--359},
  issn={1076-9803},
  review={\MRref {2740580}{2012d:46165}},
  eprint={http://nyjm.albany.edu/j/2010/16-13v.pdf},
}

\bib{Kirchberg:Nonsemisplit}{article}{
  author={Kirchberg, Eberhard},
  title={On nonsemisplit extensions, tensor products and exactness of group $C^*$\nobreakdash -algebras},
  journal={Invent. Math.},
  volume={112},
  date={1993},
  number={3},
  pages={449--489},
  issn={0020-9910},
  doi={10.1007/BF01232444},
  review={\MRref {1218321}{94d:46058}},
}

\bib{McClanahan:K-theory-partial}{article}{
  author={McClanahan, Kevin},
  title={$K$\nobreakdash-theory for partial crossed products by discrete groups},
  journal={J. Funct. Anal.},
  volume={130},
  date={1995},
  number={1},
  pages={77--117},
  issn={0022-1236},
  review={\MRref{1331978}{96i:46083}},
  doi={10.1006/jfan.1995.1064},
}

\bib{Nilsen}{article}{
   AUTHOR = {Nilsen, May},
     TITLE = {Duality for full crossed products of {$C^\ast$}-algebras by
              non-amenable groups},
   JOURNAL = {Proc. Amer. Math. Soc.},
  FJOURNAL = {Proceedings of the American Mathematical Society},
    VOLUME = {126},
      YEAR = {1998},
    NUMBER = {10},
     PAGES = {2969--2978},
      ISSN = {0002-9939},
     CODEN = {PAMYAR},
   MRCLASS = {46L55},
  MRNUMBER = {1469427 (99a:46120)},
MRREVIEWER = {John Quigg},
       DOI = {10.1090/S0002-9939-98-04598-5},
       URL = {http://dx.doi.org/10.1090/S0002-9939-98-04598-5},
}

\bib{Ng:Discrete-coactions}{article}{
  author={Ng, Chi-Keung},
  title={Discrete coactions on {$C\sp \ast $}-algebras},
  journal={J. Austral. Math. Soc. Ser. A},
  volume={60},
  year={1996},
  number={1},
  pages={118--127},
  review={\MRref {1364557}{97a:46093}},
}

\bib{Okayasu:Free-group}{article}{
  author={Okayasu, Rui},
  title={Free group \cstar{}algebras associated with $\ell _p$},
  journal={Internat. J. Math.},
  volume={25},
  year={2014},
  number={7},
  pages={1450065 (12 pages)},
  issn={0129-167X},
  doi={10.1142/S0129167X14500657},
  review={\MRref {3238088}{}},
}

\bib{Packer-Raeburn:Stabilisation}{article}{
  author={Packer, Judith A.},
  author={Raeburn, Iain},
  title={Twisted crossed products of $C^*$\nobreakdash-algebras},
  journal={Math. Proc. Cambridge Philos. Soc.},
  volume={106},
  date={1989},
  pages={293--311},
  review={\MRref{1002543}{90g:46097}},
  doi={10.1017/S0305004100078129},
}

\bib{Quigg:Discrete-coactions}{article}{
  author={Quigg, John C.},
  title={Discrete \cstar{}coactions and \cstar{}algebraic bundles},
  journal={J. Austral. Math. Soc. Ser. A},
  volume={60},
  year={1996},
  number={2},
  pages={204--221},
  issn={0263-6115},
  review={\MRref {1375586}{97c:46086}},
}

\bib{Quigg:Landstad_duality}{article}{
  author={Quigg, John C.},
  title={Landstad duality for $C^*$\nobreakdash -coactions},
  journal={Math. Scand.},
  volume={71},
  date={1992},
  number={2},
  pages={277--294},
  issn={0025-5521},
  review={\MRref {1212711}{94e:46119}},
  eprint={http://www.mscand.dk/article.php?id=956},
}

\bib{Rieffel:Proper}{article}{
  author={Rieffel, Marc A.},
  title={Proper actions of groups on $C^*$\nobreakdash -algebras},
  conference={ title={Mappings of operator algebras}, address={Philadelphia, PA}, date={1988}, },
  book={ series={Progr. Math.}, volume={84}, publisher={Birkh\"auser Boston}, place={Boston, MA}, },
  date={1990},
  pages={141--182},
  review={\MRref {1103376}{92i:46079}},
}

\bib{Rieffel:Integrable_proper}{article}{
  author={Rieffel, Marc A.},
  title={Integrable and proper actions on $C^*$\nobreakdash -algebras, and square-integrable representations of groups},
  journal={Expo. Math.},
  volume={22},
  date={2004},
  number={1},
  pages={1--53},
  issn={0723-0869},
  review={\MRref {2166968}{2006g:46108}},
  doi={10.1016/S0723-0869(04)80002-1},
}

\bib{Sehnem:MasterThesis}{thesis}{
  author={Sehnem, Camila F.},
  title={Uma classificação de fibrados de Fell estáveis},
  date={2014},
  institution={Universidade Federal de Santa Catarina},
  type={Master's Thesis},
  eprint={http://mtm.ufsc.br/pos/Camila_Fabre_Sehnem.pdf},
}

\bib{Tu}{article}{
  AUTHOR = {Tu, Jean-Louis},
     TITLE = {The {B}aum-{C}onnes conjecture and discrete group actions on
              trees},
   JOURNAL = {$K$-Theory},
  FJOURNAL = {$K$-Theory. An Interdisciplinary Journal for the Development,
              Application, and Influence of $K$-Theory in the Mathematical
              Sciences},
    VOLUME = {17},
      YEAR = {1999},
    NUMBER = {4},
     PAGES = {303--318},
      ISSN = {0920-3036},
     CODEN = {KTHEEO},
   MRCLASS = {19K35 (46L80)},
  MRNUMBER = {1706113 (2000h:19003)},
MRREVIEWER = {Yuri A. Kordyukov},
       DOI = {10.1023/A:1007751625568},
       URL = {http://dx.doi.org/10.1023/A:1007751625568},
}

 \end{biblist}
\end{bibdiv}


\end{document}